\begin{document}
\sloppy

\newfont{\blb}{msbm10 scaled\magstep1}
\newtheorem{lem}{Lemma}[section]
\newtheorem{ex}{Example}[section]
\newtheorem{cor}[lem]{Corollary}
\newtheorem{claim}[lem]{Claim}
\newtheorem{thm}[lem]{Theorem}
\newtheorem*{123}{1-2-3 Conjecture}
\newtheorem*{12}{1-2 Conjecture}
\newtheorem*{list123}{List 1-2-3 Conjecture}
\newtheorem*{LLL}{Lov\'asz Local Lemma}
\newtheorem*{SLL}{Symmetric Local Lemma}
\newtheorem*{MLL}{Modified Local Lemma}

\newtheorem{prop}[lem]{Proposition}
\newtheorem{prob}{Problem}
\newtheorem{conj}{Conjecture}
\newtheorem{defn}{Definition}

\newcommand{\lmulti}{\{\hspace{-0.035in}\{}
\newcommand{\rmulti}{\}\hspace{-0.035in}\}}

\newcommand{\per}{\textup{\rm per}\,}
\newcommand{\mind}{\textup{\rm mind}}
\newcommand{\tmind}{\textup{\rm tmind}}
\newcommand{\pind}{\textup{\rm pind}}
\newcommand{\ch}{\textup{\rm ch}}
\newcommand{\M}{\mathbb{M}}
\newcommand{\N}{\mathbb{N}}
\newcommand{\Z}{\mathbb{Z}}
\newcommand{\R}{\mathbb{R}}
\newcommand{\C}{\mathbb{C}}
\newcommand{\F}{\mathbb{F}}
\newcommand{\pr}{\mathbb{P}}
\newcommand{\D}{\Delta}
\newcommand{\Ls}{\mathcal{L}}
\newcommand{\PP}{\mathcal{P}}

\newcommand{\Se}{\chi_\Sigma^e}
\newcommand{\Pe}{\chi_\Pi^e}
\newcommand{\me}{\chi_m^e}
\newcommand{\se}{\chi_s^e}
\newcommand{\Sv}{\chi_\Sigma^v}
\newcommand{\Pv}{\chi_\Pi^v}
\newcommand{\mv}{\chi_m^v}
\newcommand{\sv}{\chi_s^v}
\newcommand{\St}{\chi_\Sigma^t}
\newcommand{\Pt}{\chi_\Pi^t}
\newcommand{\mt}{\chi_m^t}
\newcommand{\st}{\chi_s^t}
\newcommand{\chSe}{\ch_\Sigma^e}
\newcommand{\chPe}{\ch_ \Pi ^e}
\newcommand{\chme}{\ch_m^e}
\newcommand{\chse}{\ch_s^e}
\newcommand{\chSv}{\ch_\Sigma^v}
\newcommand{\chPv}{\ch_ \Pi ^v}
\newcommand{\chmv}{\ch_m^v}
\newcommand{\chsv}{\ch_s^v}
\newcommand{\chSt}{\ch_\Sigma^t}
\newcommand{\chPt}{\ch_ \Pi ^t}
\newcommand{\chmt}{\ch_m^t}
\newcommand{\chst}{\ch_s^t}

\newcommand{\eSe}{{\chi'}_\Sigma^e}
\newcommand{\ePe}{{\chi'}_\Pi^e}
\newcommand{\eme}{{\chi'}_m^e}
\newcommand{\ese}{{\chi'}_s^e}
\newcommand{\eSv}{{\chi'}_\Sigma^v}
\newcommand{\ePv}{{\chi'}_\Pi^v}
\newcommand{\emv}{{\chi'}_m^v}
\newcommand{\esv}{{\chi'}_s^v}
\newcommand{\eSt}{{\chi'}_\Sigma^t}
\newcommand{\ePt}{{\chi'}_\Pi^t}
\newcommand{\emt}{{\chi'}_m^t}
\newcommand{\est}{{\chi'}_s^t}
\newcommand{\echSe}{{\ch'}_\Sigma^e}
\newcommand{\echPe}{{\ch'}_ \Pi ^e}
\newcommand{\echme}{{\ch'}_m^e}
\newcommand{\echse}{{\ch'}_s^e}
\newcommand{\echSv}{{\ch'}_\Sigma^v}
\newcommand{\echPv}{{\ch'}_ \Pi ^v}
\newcommand{\echmv}{{\ch'}_m^v}
\newcommand{\echsv}{{\ch'}_s^v}
\newcommand{\echSt}{{\ch'}_\Sigma^t}
\newcommand{\echPt}{{\ch'}_ \Pi ^t}
\newcommand{\echmt}{{\ch'}_m^t}
\newcommand{\echst}{{\ch'}_s^t}

\newcommand{\tSe}{{\chi''}_\Sigma^e}
\newcommand{\tPe}{{\chi''}_\Pi^e}
\newcommand{\tme}{{\chi''}_m^e}
\newcommand{\tse}{{\chi''}_s^e}
\newcommand{\tSv}{{\chi''}_\Sigma^v}
\newcommand{\tPv}{{\chi''}_\Pi^v}
\newcommand{\tmv}{{\chi''}_m^v}
\newcommand{\tsv}{{\chi''}_s^v}
\newcommand{\tSt}{{\chi''}_\Sigma^t}
\newcommand{\tPt}{{\chi''}_\Pi^t}
\newcommand{\tmt}{{\chi''}_m^t}
\newcommand{\tst}{{\chi''}_s^t}
\newcommand{\tchSe}{{\ch''}_\Sigma^e}
\newcommand{\tchPe}{{\ch''}_\Pi ^e}
\newcommand{\tchme}{{\ch''}_m^e}
\newcommand{\tchse}{{\ch''}_s^e}
\newcommand{\tchSv}{{\ch''}_\Sigma^v}
\newcommand{\tchPv}{{\ch''}_\Pi ^v}
\newcommand{\tchmv}{{\ch''}_m^v}
\newcommand{\tchsv}{{\ch''}_s^v}
\newcommand{\tchSt}{{\ch''}_\Sigma^t}
\newcommand{\tchPt}{{\ch''}_\Pi ^t}
\newcommand{\tchmt}{{\ch''}_m^t}
\newcommand{\tchst}{{\ch''}_s^t}

\newcommand{\We}{\chi_{\sigma}^e}
\newcommand{\we}{\chi_{\sigma^*}^e}
\newcommand{\Wt}{\chi_{\sigma}^t}
\newcommand{\wt}{\chi_{\sigma^*}^t}
\newcommand{\Wv}{\chi_{\sigma}^v}
\newcommand{\wv}{\chi_{\sigma^*}^v}
\newcommand{\chWe}{\ch_{\sigma}^e}
\newcommand{\chwe}{\ch_{\sigma^*}^e}
\newcommand{\chWt}{\ch_{\sigma}^t}
\newcommand{\chwt}{\ch_{\sigma^*}^t}

\newcommand{\KLT}{\textrm{Karo{\'n}ski-{\L}uczak-Thomason}}

\newcommand{\spc}{\hspace{0.08in}}

\renewcommand{\thepage}{\arabic{page}}


\title{Sequence variations of the 1-2-3 Conjecture and irregularity strength}


\author{
Ben Seamone\footnote{School of Mathematics and Statistics,
Carleton University, Ottawa, Canada
\texttt{\{bseamone,brett\}@math.carleton.ca}}
\and
Brett Stevens\footnotemark[1]
}

\maketitle

\begin{abstract}
Karo{\'n}ski, {\L}uczak, and Thomason (2004) conjectured that, for any connected graph $G$ on at least three vertices, there exists an edge weighting from $\{1,2,3\}$ such that adjacent vertices receive different sums of incident edge weights.  Bartnicki, Grytczuk, and Niwcyk (2009) made a stronger conjecture, that each edge's weight may be chosen from an arbitrary list of size $3$ rather than $\{1,2,3\}$.  We examine a variation of these conjectures, where each vertex is coloured with a {\em sequence} of edge weights.  Such a colouring relies on an ordering of $E(G)$, and so two variations arise -- one where we may choose any ordering of $E(G)$ and one where the ordering is fixed.  In the former case, we bound the list size required for any graph.  In the latter, we obtain a bound on list sizes for graphs with sufficiently large minimum degree.  We also extend our methods to a list variation of irregularity strength, where each vertex receives a distinct sequence of edge weights.
\end{abstract}




\section{Introduction and Brief Survey}

A graph $G = (V,E)$ will be simple and loopless unless otherwise stated.  Throughout, we write $[k]$ for the set $\{1, 2,\ldots,k\}$.  An {\bf edge $k$-weighting}, $w$, of $G$ is a an assignment of a number from $[k]$ to each $e \in E(G)$, that is $w: E(G) \rightarrow[k]$.  Karo{\'n}ski, {\L}uczak, and Thomason \cite{KLT04} conjectured that, for every graph without a component isomorphic to $K_2$, there is an edge $3$-weighting such that any two adjacent vertices have different sums of incident edge weights.  If an edge $k$-weighting gives rise to such a proper vertex colouring, we say that the weighting is a {\bf vertex colouring by sums}.  We will denote by $\Se(G)$ the smallest value of $k$ such that a graph $G$ has an edge $k$-weighting which is a vertex colouring by sums (this notation is a slight modification of that proposed by Gy{\H o}ri and Palmer in \cite{GP09}).  We say that a graph $G$ is {\bf nice} if no component is isomorphic to $K_2$.  We may express Karo{\'n}ski, {\L}uczak, and Thomason's conjecture (frequently called the ``1-2-3 Conjecture") as follows:

\begin{123}[Karo{\'n}ski, {\L}uczak, Thomason \cite{KLT04}] 
If $G$ is nice, then $\Se(G) \leq 3$.
\end{123}

One may also obtain a vertex colouring from an edge $k$-weighting by considering the products, sets, or multisets of incident edge weights.  The smallest $k$ for which a graph $G$ has an edge $k$-weighting which is a proper vertex colouring by products, sets or multisets will be denoted $\Pe(G), \se(G)$ and $\me(G)$, respectively.  The best known bounds for these graph parameters are, for any nice graph $G$, $\Se(G) \leq 5$ \cite{KKP1}, $\me(G) \leq 4$ \cite{AADR05}, $\Pe(G) \leq 5$ \cite{SK08}, and $\se(G) =  \lceil{\log_2{\chi(G)}}\rceil + 1$ \cite{GP09}.  It is shown in  \cite{AADR05} that if $\delta(G) \geq 1000$, then $\me(G) \leq 3$.  In \cite{ADR08} it is shown that, asymptotically almost surely, $\Se(G) \leq 2$.

One may also allow each vertex to receive a weight from $[k]$, in addition to the edge weights; such weightings of $G$ are called {\bf total $k$-weightings}.  Vertex colourings via total weightings are obtained by considering the weights of the edges incident to a vertex as well as the vertex's weight itself.  The smallest $k$ for which a graph $G$ has a total $k$-weighting which is a proper vertex colouring by sums, products, sets or multisets is denoted $\St(G), \Pt(G), \st(G)$ and $\mt(G)$, respectively.  

The following conjecture motivates the study of total weightings and vertex colouring by sums:

\begin{12}[Przyby{\l}o, Wo\'zniak \cite{PW10}] 
For every graph $G$, $\St(G) \leq 2$.
\end{12}

Clearly, any upper bound on an edge $k$-weighting parameter is an upper bound on its corresponding total $k$-weighting parameter.  The best known improvements on the bounds above are, for an arbitrary graph $G$, $\St(G) \leq 3$ \cite{Kal} (in fact, only vertex weights 1 and 2 are required) and $\Pt(G) \leq 3$ \cite{SK08}.  Clearly both $\St(G)$ and $\Pt(G)$ are upper bounds on $\mt(G)$, so we have that $\mt(G) \leq 3$ as well.

All of the above graph colouring parameters have natural list generalizations.  Rather than choosing a weight from $[k]$ for each edge (vertex), one must choose a weight for each edge (vertex) from a set of $k$ arbitrary real numbers independently assigned to each edge (vertex).  We call such weightings {\bf edge $k$-list-weightings} and {\bf total $k$-list-weightings} (in the case where vertex weights are included).  Given a graph $G$, the smallest $k$ such that any assignment of lists of size $k$ to $E(G)$ permits an edge $k$-list-weighting which is a vertex colouring by sums is denoted $\chSe(G)$; each of the parameters above generalizes similarly.

The following conjecture proposes a stronger version of the 1-2-3 Conjecture:

\begin{list123}[Bartnicki, Grytczuk, Niwcyk \cite{BGN09}] 
If $G$ is a nice graph, then $\chSe(G) \leq 3$.
\end{list123}

It is shown in \cite{mythesis, Ben-CN} that $\chSe(G) \leq 2\Delta(G) + 1$ for any nice graph $G$.  However, there is no known integer $K$ such that $\chSe(G) \leq K$ for any nice graph $G$.  Bartnicki et al. \cite{BGN09} establish that $\chSe(G) \leq 3$ if $G$ is complete, complete bipartite, or a tree.  The analogous problem for digraphs is also solved in \cite{BGN09} and \cite{Ben2}.  In the former, a constructive method is used to show that $\chSe(D) \leq 2$ for any digraph $D$; the latter provides an alternate proof using algebraic methods.

The multiset version of the 1-2-3 Conjecture is a natural relaxation of the requirement that adjacent vertices receive distinct sums.  This paper is concerned with a further relaxation of the multiset version, where one requires that adjacent vertices receive distinct sequences (given some reasonable method of constructing a sequence from weights of incident edges).  In Section 2, we introduce the problem of colouring $V(G)$ by sequences of weights from incident edges.  In Section 3, we study colouring by sequences with the requirement that {\em every} vertex receives a distinct sequence rather than only adjacent vertices; this is a variation of a well studied parameter known as the irregularity strength of a graph.  Wherever possible, we study the stronger ``list versions'' of these weighting problems.

\section{Vertex Colouring by Sequences}

We must first define how to induce a sequence of weights from an edge weighting.  
Let $E(G) = \{e_1, e_2, \ldots, e_m\}$ be the edge set of a graph $G$, $\prec$ a total order on $E(G)$, and let $w:E(G) \to S$ be an edge weighting of $G$.  For a vertex $v \in V(G)$, let $I_v = \{i \,:\, e_i \ni v\}$.  A {\bf colouring of $V(G)$ from $w$ by sequences} is obtained by constructing a sequence for each $v \in V(G)$ by taking the multiset $\lmulti w(e_i) \,:\, i \in I_v\rmulti$ and ordering the elements according to $w(e_i) \prec w(e_j)$ if and only if $e_i \prec e_j$.  

For example, consider $C_5$ with vertices and edges labelled as in Figure \ref{C5seq}: 
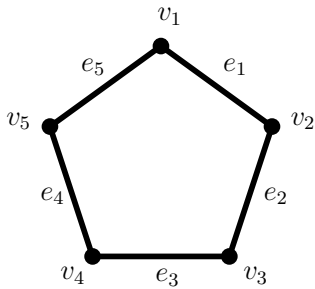
\begin{figure}[htb]
\begin{center}
\scalebox{0.9}{
\begin{tikzpicture}
\clip(-3,-0.5) rectangle (3,3.8);
\draw [line width=2.4pt] (-1.01,0)-- (1.01,0);
\draw [line width=2.4pt] (1.01,0)-- (1.64,1.92);
\draw [line width=2.4pt] (1.64,1.92)-- (0,3.11);
\draw [line width=2.4pt] (0,3.11)-- (-1.64,1.92);
\draw [line width=2.4pt] (-1.64,1.92)-- (-1.01,0);
\fill [color=black] (-1.01,0) circle (3.5pt);
\draw[color=black] (-1.3,-0.28) node {$v_4$};
\fill [color=black] (1.01,0) circle (3.5pt);
\draw[color=black] (1.4,-0.28) node {$v_3$};
\draw[color=black] (0.1,-0.31) node {$e_3$};
\draw[color=black] (1.7,0.9) node {$e_2$};
\draw[color=black] (1.1,2.8) node {$e_1$};
\draw[color=black] (-1,2.8) node {$e_5$};
\draw[color=black] (-1.6,0.9) node {$e_4$};
\fill [color=black] (1.64,1.92) circle (3.5pt);
\draw[color=black] (2.1,2) node {$v_2$};
\fill [color=black] (0,3.11) circle (3.5pt);
\draw[color=black] (0.13,3.55) node {$v_1$};
\fill [color=black] (-1.64,1.92) circle (3.5pt);
\draw[color=black] (-2.1,2) node {$v_5$};
\end{tikzpicture}
}
\end{center}
\vspace{-0.2in}
\caption{A labelled $5$-cycle}
\label{C5seq}
\end{figure}

We will consider two edge orderings of this graph and attempt to properly colour the vertices by sequences for each using only two edge weights.

If the edges are ordered $e_1 \prec e_2 \prec e_3 \prec e_4 \prec e_5$, then there is an edge $2$-weighting $w:E(C_5) \to \{a,b\}$ which is a proper colouring by sequences, given in Table \ref{c5weighting}: \\

\begin{table}[h!]
\begin{center}
	\begin{tabular}{| c | c |}
    	\hline
	Weighting & Colouring \\
	\hline
	$w(e_1) = a$ & $c(v_1) = aa$ \\
	$w(e_2) = b$ & $c(v_2) = ab$ \\
	$w(e_3) = a$ & $c(v_3) = ba$ \\
	$w(e_4) = b$ & $c(v_4) = ab$ \\ 
	$w(e_5) = a$ & $c(v_5) = ba$ \\
		\hline
  	\end{tabular} \\
  \caption{An edge $2$-weighting that properly colours $V(C_5)$ by sequences}
  \label{c5weighting}
\end{center}
\end{table} 

However, for the ordering $e_1 \prec e_3 \prec e_5 \prec e_2 \prec e_4$, the vertex colours given by a weighting $w$ are
	$$\begin{array}{c}
	c(v_1) = w(e_1)w(e_5), \\
	c(v_2) = w(e_1)w(e_2), \\
	c(v_3) = w(e_3)w(e_2), \\
	c(v_4) = w(e_3)w(e_4), \\
	c(v_5) = w(e_5)w(e_4).
	\end{array} $$
In order to have a proper colouring, 
	\begin{eqnarray}
	w(e_5) &\neq& w(e_2), \\
	w(e_1) &\neq& w(e_3), \\
	w(e_2) &\neq& w(e_4), \\
	w(e_3) &\neq& w(e_5). 
	\end{eqnarray}
If $w$ is a weighting with only two edge weights, then inequalities (1) and (3) imply that $w(e_4) = w(e_5)$, while (2) and (4) imply that $w(e_1) = w(e_5)$.  Together, this forces $c(v_1) = c(v_5)$, and hence $C_5$ cannot be properly vertex coloured by sequences with two edge weights for the ordering $e_1 \prec e_3 \prec e_5 \prec e_2 \prec e_4$.

Hence, the order of the edges plays a significant role in vertex colouring by sequences.  As such, we consider the following two problems:

\begin{prob}\label{prob1}
Given a graph $G$, what is the smallest value of $k$ such that there is an edge $k$-weighting of $G$ which gives a proper colouring of $V(G)$ by sequences {\em for some} ordering of $E(G)$?
\end{prob}

\begin{prob}\label{prob2}
Given a graph $G$, what is the smallest value of $k$ such that there is an edge $k$-weighting of $G$ which gives a proper colouring of $V(G)$ by sequences {\em for every} ordering of $E(G)$?
\end{prob}

These parameters will be called $\we(G)$ and $\We(G)$, respectively.  For the list-weighting variations, $\chwe(G)$ and $\chWe(G)$ will be used.

\subsection{Colouring by sequences for some $E(G)$ ordering}

The case when one is free to choose an ``optimal'' ordering of the edges of a graph $G$ is the easier of the two problems to analyze.  In this section, Problem \ref{prob1} is solved completely for edge weightings and total weightings for graphs and multigraphs.

We begin with the simple case of cycles.

\begin{prop}\label{wordcycle}
If $n \geq 3$, $\chwe(C_n) = 2$.
\end{prop}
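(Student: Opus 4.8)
The plan is to prove the two inequalities separately, with the lower bound being immediate and the upper bound resting on one ordering-free observation.

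For $\chwe(C_n) \geq 2$, I would assign every edge the same singleton list $\{a\}$. Every weight is then forced to $a$, so each of the degree-$2$ vertices of $C_n$ receives the sequence $aa$ under \emph{every} ordering, and adjacent vertices clash. Hence lists of size $1$ can never suffice. For the upper bound $\chwe(C_n) \leq 2$, label the vertices $v_1,\dots,v_n$ and edges $e_1,\dots,e_n$ cyclically so that $e_i = v_iv_{i+1}$ (indices modulo $n$), and suppose each $e_i$ carries a list $L_i$ with $|L_i| = 2$. The key observation I would isolate first is independent of the ordering: $v_i$ sees the multiset $\{w(e_{i-1}),w(e_i)\}$ while $v_{i+1}$ sees $\{w(e_i),w(e_{i+1})\}$, so if $w(e_{i-1}) \neq w(e_{i+1})$ these multisets already differ and therefore $c(v_i) \neq c(v_{i+1})$ for \emph{every} ordering $\prec$. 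Thus it suffices to choose weights with $w(e_{i-1}) \neq w(e_{i+1})$ for all $i$, which is exactly a proper list-colouring (from the $L_i$) of the auxiliary graph $H$ on vertex set $\{e_1,\dots,e_n\}$ in which $e_j \sim e_{j+2}$. This $H$ is a disjoint union of cycles: a single $n$-cycle when $n$ is odd, and two $(n/2)$-cycles when $n$ is even.

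In the main case I would simply $2$-list-colour $H$ and take any ordering of $E(C_n)$. Since every cycle has list chromatic number at most $3$, and exactly $2$ when it is even, such a colouring exists whenever the components of $H$ are even (in particular for $n \equiv 0 \pmod 4$) and, more generally, whenever no odd component of $H$ has all of its lists equal. The observation above then finishes the proof immediately.

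The remaining degenerate case, in which some odd component of $H$ has all lists equal to a common pair $\{a,b\}$, is the crux, and it is where the freedom to choose the ordering is actually used. Here a proper $2$-list-colouring of $H$ is impossible, so I would instead colour $H$ with the minimum number of defect positions $i$ (those with $w(e_{i-1}) = w(e_{i+1})$), attaining at most one defect per odd component while keeping $w(e_i)$ different from the repeated value, so that no three cyclically consecutive edges share a weight. At a defect position we have $w(e_{i-1}) = w(e_{i+1}) \neq w(e_i)$, so $v_i$ and $v_{i+1}$ carry the same $2$-element multiset; a short case check on the four possible positions of $e_i$ shows that their sequences differ precisely when $e_i$ is the $\prec$-median of $\{e_{i-1},e_i,e_{i+1}\}$. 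It then remains to exhibit a single ordering making each defect edge the median of its triple. The main obstacle is verifying that these median requirements are always mutually satisfiable; this should follow because at most two defects arise (one per odd component, so $n$ odd forces one and $n \equiv 2 \pmod 4$ forces at most two), and since one may choose within each component where its defect sits, the defects can be placed in pairwise disjoint triples and handled independently.
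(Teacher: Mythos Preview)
Your lower bound is fine, and the multiset observation correctly handles every instance in which the auxiliary graph $H$ is $2$-list-colourable; the gap is in the degenerate case. Take $n=5$ with all lists equal to $\{a,b\}$. Here $H$ is the $5$-cycle with edges $e_1e_3,\,e_3e_5,\,e_5e_2,\,e_2e_4,\,e_4e_1$. With a single monochromatic $H$-edge, say $w(e_1)=w(e_3)=a$, the rest of $H$ is forced to alternate, giving $w(e_5)=b$, $w(e_2)=a$, $w(e_4)=b$. The defect sits at position $i=2$ with repeated value $a$, yet $w(e_2)=a$ as well, so your requirement ``$w(e_i)$ different from the repeated value'' fails. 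By symmetry the same happens for every placement of the single defect and for the swap $a\leftrightarrow b$. More generally, for odd $n$ the vertex $e_i$ lies at $H$-distance $(n-1)/2$ from each of $e_{i\pm1}$, so a single defect forces $w(e_i)$ to equal the repeated value precisely when $n\equiv 1\pmod 4$. Hence ``at most one defect per odd component while keeping $w(e_i)$ different from the repeated value'' is unachievable for such $n$, and your median-ordering step never gets off the ground.

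The paper's argument sidesteps this with a far more direct idea. It fixes the natural ordering $e_1\prec\cdots\prec e_n$ \emph{first}; then $c(v_i)=w(e_{i-1})w(e_i)$ for $2\le i\le n$ and $c(v_1)=w(e_1)w(e_n)$, so for $2\le i\le n-1$ the pair $v_i,v_{i+1}$ is already distinguished whenever $w(e_i)\ne w(e_{i+1})$. One picks $w(e_2)\ne w(e_n)$, then greedily forces $w(e_i)\ne w(e_{i-1})$ for $i=3,\dots,n-1$, and finally $w(e_1)\ne w(e_{n-1})$; these choices handle all adjacent pairs. The argument never asks for $w(e_{i-1})\ne w(e_{i+1})$, so the parity obstruction you ran into does not arise.
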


\begin{proof}
The $n=3$ case is trivial.  Let $n \geq 4$, $V(C_n) = \{v_1, \ldots, v_n\}$, and $E(C_n) = \{v_iv_{i+1} \,:\, 1 \leq i \leq n\}$ with index addition taken $\bmod{\,\,n}$.  Let $e_i = v_iv_{i+1}$. 
For each $i = 1, \ldots, n$, let $L_{e_i}$ be a set of 2 elements.   Choose $w(e_2) \in L_{e_2}$ and $w(e_n) \in L_{e_n}$ such that $w(e_2) \neq w(e_n)$.  For each $i = 3, \ldots, n-1$, let $w(e_i) \in L_{e_i} \setminus \{w(e_{i-1})\}$ and let $w(e_1) \in L_{e_1} \setminus \{w(e_{n-1})\}$.  The resulting vertex colouring by sequences is proper.
\end{proof}

The {\bf prefix} of length $t$ of a sequence $a_1a_2\cdots a_n$ is the subsequence $a_1a_2\cdots a_t$.  A vertex colouring by sequences, $c$, is {\bf prefix distinguishing} if, for any $uv \in E(G)$ with $d(u) \geq d(v) \geq 2$, $c(v)$ is not the prefix of $c(u)$; in other words, if $c(v) = a_1a_2\cdots a_k$ and $c(u) = b_1b_2\cdots b_l$ for some $l \geq k$, then there exists an index $i$, $1 \leq i \leq k$, such that $a_i \neq b_i$.  
Clearly any prefix distinguishing vertex colouring by sequences is also a proper vertex colouring.  
By proving a stronger statement about prefix distinguishing colourings by sequences, one can show that $\chwe(G) \leq 2$ for every nice graph $G$.  Note that we use $|S|$ to denote the length of a sequence $S$.

\begin{thm}\label{word}
Let $G$ be a nice connected graph and for each $e \in E(G)$ let $L_e$ be a set of two real numbers.  There is an ordering of $E(G)$ and values $w(e) \in L_e$, $e \in E(G)$, such that $w$ is a prefix distinguishing vertex colouring by sequences.
\end{thm}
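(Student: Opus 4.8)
The plan is to induct on $|V(G)|$, maintaining the stronger prefix-distinguishing property precisely because it is robust under appending: if $c'(y)$ is not a prefix of $c'(x)$, then the two sequences already differ within the first $\min\{|c'(x)|,|c'(y)|\}$ positions, and this disagreement survives no matter what symbols are later appended to the ends of the two sequences. This is exactly the behaviour one wants from an induction that grows $G$ one vertex at a time while placing the new edges last in the ordering. For the base cases I would dispose of the small nice connected graphs on three vertices ($P_3$ and $K_3$) by hand; cycles are already covered by Proposition \ref{wordcycle}.

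For the inductive step, choose a non-cut vertex $v$ of $G$, so that $G' = G - v$ is connected; for $|V(G)| \ge 4$ the graph $G'$ has at least three vertices and is therefore nice, so the induction hypothesis supplies an ordering $\prec'$ of $E(G')$ and values $w(e) \in L_e$ giving a prefix-distinguishing colouring $c'$. I would then define $\prec$ on $E(G)$ by keeping $\prec'$ on $E(G')$ and declaring every edge incident to $v$ to be larger than every edge of $G'$; the relative order of the edges at $v$ (an entire terminal block) is left free, to be fixed later. Under this ordering each old vertex $u \ne v$ has $c(u) = c'(u)$ if $u \not\sim v$, and $c(u) = c'(u)\,w(uv)$ (a single symbol appended) if $u \sim v$.

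Next I would check which adjacencies can still fail. For an old edge $xy$ with $\min\{d_{G'}(x), d_{G'}(y)\} \ge 2$, the induction hypothesis gives a disagreement within the first $\min\{d_{G'}(x),d_{G'}(y)\}$ positions, and by append-robustness this disagreement persists in $c$ and lies within the first $\min\{d_G(x),d_G(y)\}$ positions, so the pair remains prefix-distinguished regardless of how the degree order shifted. The only old pairs not automatically handled are edges $xy$ where one endpoint, say $y$, is a leaf of $G'$ that becomes a degree-$2$ vertex of $G$ by being adjacent to $v$; here $c'(y) = (w(xy))$ has length one, so the hypothesis says nothing, and one must instead force $(c(x)_1, c(x)_2) \ne (w(xy), w(vy))$, where $c(x)_1=c'(x)_1$ and $c(x)_2=c'(x)_2$ are fixed (since such an $x$ has $d_{G'}(x)\ge 2$). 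The remaining pairs to control are those through $v$ itself, namely $\{v,u\}$ for each neighbour $u$ of $v$.

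The heart of the argument --- and the step I expect to be the main obstacle --- is to choose the values $w(vu) \in L_{vu}$ together with the internal order of the terminal block so that $c(v)$ is prefix-distinguished from every $c(u)$, while simultaneously repairing the former-leaf pairs above. Two features make this tractable: the terminal block may be permuted freely, so $c(v)$ can be \emph{any} arrangement of the chosen symbols $w(vu)$; and each former-leaf repair forbids at most one value of the corresponding $w(vu)$, leaving a legal choice in the size-$2$ list. For each neighbour $u$ one needs a single position $p \le \min\{d_G(v), d_G(u)\}$ at which $c(v)$ differs from $c(u)$; since $G'$ is connected every such $u$ has $d_G(u) \ge 2$, so the fixed prefix $c'(u)$ already pins down $c(u)$ except in its last coordinate $w(vu)$. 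I would argue that a suitable position can be reserved for each neighbour --- handling the high-degree neighbours through the entries of $c'(u)$ that $c(v)$ can be made to avoid, and the degree-$2$ neighbours by choosing their weight $w(vu)$ to disagree with the relevant early entry of $c(v)$ --- and that the size-$2$ lists leave enough room to meet all these constraints at once. Making this selection precise, and checking that the leaf-repair restrictions never collide with the neighbour-distinguishing choices, is where the real care is needed; I expect this to require a short greedy or system-of-distinct-representatives argument rather than any single slick identity.
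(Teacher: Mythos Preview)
Your induction scaffold is the same as the paper's: remove one vertex, place the deleted edges at the end of the ordering, and exploit the append-robustness of the prefix-distinguishing condition. Where you diverge is in \emph{which} vertex you remove. You take a non-cut vertex in order to keep $G'$ connected; the paper instead removes a vertex $x$ of \emph{minimum} degree $d=\delta(G)$, tolerating the possibility that $G'$ is disconnected (each component is still nice, so induction applies componentwise). That single choice is what makes the paper's endgame trivial and leaves yours unfinished.

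With $x$ of minimum degree, the paper gets two structural gifts you forfeit. First, when $d\ge 3$ every vertex of $G'$ has degree at least $d-1\ge 2$, so there are \emph{no} former leaves to repair; the only constraints are between $x$ and its $d$ neighbours. Second, each neighbour $y_i$ already has $|c'(y_i)|\ge d-1$, so its $(d-1)$-prefix is a fixed string independent of the new weights. One then simply needs the $(d-1)$-prefix of $c(x)$ to avoid $d$ fixed strings, and $2^{d-1}>d$ settles it. The $d=1$ and $d=2$ cases are small enough to do by hand (with the cycle subcase delegated to Proposition~\ref{wordcycle}).

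By removing an arbitrary non-cut vertex $v$, you allow $v$ to have large degree while many of its neighbours are leaves of $G'$. Now the ``$v$ versus $u_i$'' constraints for low-degree neighbours involve the still-unchosen weights $w(vu_i)$ on \emph{both} sides, and they must be satisfied simultaneously with the leaf-repair constraints. You acknowledge this is ``where the real care is needed'' and defer to an unspecified greedy or SDR argument. That is the gap: the proposal does not contain the selection argument, and there is no evident reason a generic SDR count succeeds here, since the forbidden prefixes for $c(v)$ can have varying lengths (down to $2$) and can depend on the very values you are choosing. The paper's minimum-degree trick is exactly the missing idea that collapses this system to a one-line pigeonhole.
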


\begin{proof}
We prove the statement by induction on $|V(G)|$.  The theorem is true if $|V(G)| = 3$; assume $|V(G)| \geq 4$.  Let $d = \delta(G)$, let $x \in V(G)$ be a vertex of minimum degree, and let $G' = G - x$ (note that no component of $G'$ is isomorphic to $K_2$).  For various values of $d$, it will be shown that an edge weighting $w'$ which gives a prefix distinguishing vertex colouring by sequences of $G'$, say $c'$, 
can be extended to $G$.  Let $w$ and $c$ denote the extended edge weighting and vertex colouring of $G$, respectively.  In each case we consider, the ordering of the edges of $E(G')$ which gives $c'$ is extended by appending the edges incident to $x$ to the end of the ordering (and hence, the weights of these edges to the ends of the colours of vertices in $N_G(x)$).

Suppose $d = 1$, and let $N_G(x) = \{y\}$.  If $d_{G'}(y) = 1$, let $z$ be the neighbour of $y$ in $G'$ and choose $w(xy) \in L_{xy}$ different from the second entry in $c'(z)$.  Otherwise, $d_{G'}(y) \geq 2$ and so, since $c'$ is prefix distinguishing, any choice of $w(xy) \in L_{xy}$ gives a prefix distinguishing colouring of $V(G)$.

Suppose $d=2$.  If $G$ is a cycle, then the result follows by Proposition \ref{wordcycle}.  Assume $G$ is not a cycle.  One may choose $x$ such that one of its neighbours has degree at least 3 in $G$; call this neighbour $y_1$.  Let $y_2$ denote the other neighbour of $x$.  There are two forbidden values of $c(x)$ given by the length 2 prefixes of $c'(y_1)$ and $c'(y_2)$.  If $d_{G'}(y_2) = 1$, let $z$ denote the neighbour of $y_2$ in $G'$ and choose $w(xy_2) \in L_{xy_2}$ different from the second entry in $c'(z)$.  There are then at least three possible colours for $c(x)$, and so at least one permissible choice of $w(xy_1) \in L_{xy_1}$.  Suppose that $d_{G'}(y_2) \geq 2$.  There are at least four possible colours for $c(x)$, and so at least one permissible choice of $w(xy_1) \in L_{xy_1}$ and $w(xy_2) \in L_{xy_2}$, and hence at least two permissible choices which give the desired $c$.

Suppose $d \geq 3$, and let $N_G(x) = \{y_1, \ldots y_d\}$.  Order $E(G)$ beginning with the edges of $E(G')$ as ordered by the induction hypothesis, and adding $xy_1 \prec \ldots \prec xy_d$ to the end of the ordering.  Since $c'$ is prefix distinguishing in $G'$, any choices of $w(xy_i) \in L_{xy_i}$, $i = 1, \ldots, d$, will be prefix distinguishing in $G$ except perhaps between $x$ and some $y_i$.  Since $\delta(G') \geq d-1$, the length of each sequence $c'(y_i)$ is at least $d-1$.  Forbid $x$ from receiving the same $(d-1)$-prefix as any of $y_1, y_2, \ldots, y_d$.  
There are $2^{d-1}$ choices for the weights of $xy_1, \ldots xy_{d-1}$, and hence for the prefix of length $d-1$ of $c(x)$.  Since $d \geq 3$, it follows that $2^{d-1} > d$ and hence at least one $(d-1)$-prefix does not conflict with any of the $(d-1)$-prefixes of the colours assigned to $y_1, y_2, \ldots, y_d$.  Any choice of $xy_d \in L_{xy_d}$ completes the weighting.
\end{proof}

\begin{cor}\label{chooseorder}
If $G$ is a nice graph, then $\chwe(G) \leq 2$.
\end{cor}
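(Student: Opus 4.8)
The plan is to deduce this directly from Theorem \ref{word}, whose conclusion (a prefix distinguishing colouring) is stronger than what the corollary requires. Two small gaps must be bridged: Theorem \ref{word} assumes connectivity whereas $\chwe$ is defined for arbitrary nice graphs, and we must translate ``prefix distinguishing for some ordering using size-$2$ lists'' into the stated bound $\chwe(G) \leq 2$.

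First I would reduce to the connected case by decomposing $G$ into its connected components $H_1, \ldots, H_r$. Since $G$ is nice, no component is isomorphic to $K_2$, so each $H_j$ is a nice connected graph (either an isolated vertex, which is trivial, or a connected graph on at least three vertices). Given an arbitrary assignment of two-element lists $L_e$ to the edges of $G$, I would apply Theorem \ref{word} to each $H_j$ separately, obtaining an ordering $\prec_j$ of $E(H_j)$ together with a weighting $w_j$ drawn from the lists that is prefix distinguishing on $H_j$.

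Next I would assemble these into a global solution. Concatenate the orderings $\prec_1, \ldots, \prec_r$ into a single total order $\prec$ on $E(G)$ (placing all edges ordered by $\prec_1$ first, then those ordered by $\prec_2$, and so on), and let $w$ agree with $w_j$ on each $E(H_j)$. The essential observation is that the sequence assigned to a vertex $v$ depends only on the relative order of the edges incident to $v$, and all such edges lie in the component containing $v$; concatenation preserves the relative order within each component, so the sequence colour of every vertex under $(\prec, w)$ coincides with its colour under $(\prec_j, w_j)$.

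Finally, since any prefix distinguishing colouring by sequences is proper (as noted just before Theorem \ref{word}), each $w_j$ gives a proper colouring of $H_j$; as every edge of $G$ lies in a single component, $w$ properly colours $V(G)$ by sequences for the ordering $\prec$. Because the lists $L_e$ formed an arbitrary assignment of size two, this shows that two-element lists always admit a valid ordering and weighting, that is, $\chwe(G) \leq 2$. I expect no genuine obstacle beyond verifying that the concatenation of the component orderings leaves each vertex sequence intact, which is immediate once one observes that these sequences are determined component-wise; the substantive work is carried entirely by Theorem \ref{word}.
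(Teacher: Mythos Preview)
Your proposal is correct and follows the same approach the paper intends: the corollary is stated immediately after Theorem~\ref{word} with no proof, as a direct consequence. You have simply made explicit the routine component-wise reduction (and the handling of isolated vertices) that the paper leaves implicit, together with the observation that a prefix distinguishing colouring is in particular proper.
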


To obtain a similar result for a total $k$-weighting of a graph $G$, create a new graph $H$ by adding a leaf to each $v \in V(G)$ and assigning the new leaf edge incident to $v$ the list $L_v$.  Applying Theorem \ref{word} to $H$ gives an ordering of the vertices and edges of $G$ and a total $k$-list-weighting of $G$ which colours $V(G)$ by sequences.  Hence, we have the following corollary:

\begin{cor}\label{totalword}
For any graph $G$, $\chwt(G) \leq 2$.
\end{cor}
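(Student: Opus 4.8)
The plan is to reduce the total--weighting problem to the edge--weighting problem already settled in Theorem \ref{word}, exactly along the lines of the paragraph preceding the corollary: encode the vertex weight of each $v$ as the weight of a freshly attached pendant edge. First I would build the auxiliary graph $H$ by adding, for every $v \in V(G)$, a new leaf $v'$ and the edge $vv'$, and I would transfer the list data by assigning to the pendant edge $vv'$ the list $L_v$ that the total weighting had assigned to the vertex $v$, while leaving each original edge $e \in E(G)$ with its list $L_e$. Every list then has size two, so $H$ carries an edge $2$--list--assignment to which Theorem \ref{word} is designed to apply.

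Before invoking the theorem I would verify its hypotheses, arguing one component at a time since Theorem \ref{word} is stated for nice connected graphs. For a component $C$ of $G$ that contains at least one edge, the corresponding component $C^{+}$ of $H$ (namely $C$ with one pendant leaf hung on each of its vertices) is connected and has $2|V(C)| \geq 4$ vertices, so it is not isomorphic to $K_2$ and is therefore nice; Theorem \ref{word} then yields an ordering $\prec$ of $E(C^{+})$ together with weights $w(e) \in L_e$ producing a prefix distinguishing, and in particular proper, colouring $c$ of $V(C^{+})$ by sequences. The only situation the construction does not cover directly is an isolated vertex of $G$, whose component in $H$ is a copy of $K_2$; but such a vertex has no neighbour in $G$, so any weight chosen from its list colours it admissibly, and it can be handled separately.

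Next I would translate the colouring of $C^{+}$ back into a total weighting of $C$. The crucial observation is the correspondence between the two descriptions of the sequence at a vertex $v \in V(C)$: the edges of $C^{+}$ incident to $v$ are precisely the edges of $C$ at $v$ together with the single pendant edge $vv'$, so reading $w(vv')$ as the vertex weight $w(v)$ converts the $C^{+}$--sequence of $v$ into the total--weighting sequence of $v$ in $C$. Under the identification $vv' \leftrightarrow v$, the order $\prec$ on $E(C^{+})$ restricts to a total order on $E(C) \cup V(C)$, which is exactly the ingredient needed to form sequences in a total weighting, and it inserts $w(v)$ into each sequence in the same relative position in which $\prec$ placed the pendant edge. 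Hence the sequence that $c$ assigns to $v$ coincides with the sequence $v$ receives under this total weighting and ordering.

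Finally I would conclude. Because every edge of $C$ is an edge of $C^{+}$ and $c$ is proper on $V(C^{+}) \supseteq V(C)$, adjacent vertices of $C$ receive distinct sequences; assembling the per--component orderings into one global ordering (the relative order across distinct components is immaterial, since a vertex's sequence involves only edges of its own component) produces a total $2$--list--weighting of $G$ and an ordering of $E(G) \cup V(G)$ that properly colours $V(G)$ by sequences, giving $\chwt(G) \leq 2$. I expect the only genuinely delicate point to be the bookkeeping in the sequence correspondence of the previous paragraph, namely checking that the position of the vertex weight inside each sequence is inherited correctly from $\prec$, together with the routine confirmation that $H$ is nice so that Theorem \ref{word} is applicable at all.
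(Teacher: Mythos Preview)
Your proposal is correct and follows exactly the approach sketched in the paper's paragraph preceding the corollary: attach a pendant leaf to each vertex, transfer the vertex list $L_v$ to the new pendant edge $vv'$, apply Theorem~\ref{word} to the resulting graph $H$, and reinterpret pendant-edge weights as vertex weights. You supply careful detail the paper omits---verifying niceness of $H$ component by component and treating isolated vertices separately---but the underlying strategy is identical.
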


Theorem \ref{word} also easily extends to multigraphs.  We call a multigraph {\bf nice} if it has no loopless connected component with exactly two vertices.

\begin{thm}
If $M$ is a nice, loopless multigraph, then there is an ordering of $E(M)$ such that, for any assignment of lists of size $2$ to the edges of $M$, there exists an edge $2$-list-weighting $w$ which gives a prefix distinguishing vertex colouring by sequences.\end{thm}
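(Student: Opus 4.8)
The plan is to mimic the inductive argument of Theorem~\ref{word} essentially verbatim, with the understanding that the proof given there almost never used simplicity of $G$ in an essential way. The key structural observation is that the induction in Theorem~\ref{word} proceeds by deleting a minimum-degree vertex $x$, applying the hypothesis to $G-x$, and then appending the edges at $x$ to the end of the edge ordering so that the colours of the neighbours of $x$ are extended at their tails. In a multigraph this goes through, because the only quantities that matter are (i) the degrees $d_M(y_i)$ of the neighbours (counting multiplicity), which control the length of each neighbour's sequence and hence how long a prefix we may safely distinguish, and (ii) the number of free binary choices we have for weighting the edges at $x$, which is $2^{\deg(x)}$ regardless of whether some of those edges are parallel. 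So I would set up the same induction on $|V(M)|$, choosing $x$ to be a vertex of minimum degree $d=\delta(M)$ and passing to $M'=M-x$, first checking that $M'$ is again nice (deleting a vertex cannot create a loopless connected two-vertex component unless $x$ had been attached to such a pair, which a short case check rules out, exactly as in the simple-graph proof).

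First I would handle the three degree regimes exactly as before. When $d=1$, the vertex $x$ has a single (possibly the endpoint of a multi-edge, but with $d_M(x)=1$ just one) edge $xy$; as in Theorem~\ref{word}, if $y$ has degree $1$ in $M'$ one avoids the single forbidden value coming from the second entry of its other neighbour's colour, and otherwise any weight works by prefix-distinguishedness. When $d=2$, I would again split off the cycle case (here meaning a multigraph cycle, including the digon $C_2$-type component, which is why we exclude two-vertex loopless components in the definition of nice) via Proposition~\ref{wordcycle} or a direct check, and otherwise choose $x$ with a neighbour of degree $\geq 3$ and count colours for $c(x)$ exactly as in the simple case, noting that parallel edges at $x$ only increase the number of available weight combinations. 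When $d\geq 3$, the counting argument $2^{d-1}>d$ is unchanged: we forbid $x$ from sharing a $(d-1)$-prefix with each of its $d$ neighbours, and since $\delta(M')\geq d-1$ each neighbour's sequence has length at least $d-1$ so these prefixes are well-defined, leaving a valid prefix for $x$.

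The one genuinely new bookkeeping issue — and the step I expect to be the mild obstacle — is that an edge ordering must now be fixed \emph{in advance}, uniformly over all list assignments, because the theorem asserts a single ordering of $E(M)$ that works for \emph{every} list assignment of size $2$, whereas Theorem~\ref{word} produced the ordering and the weighting together. The remedy is to observe that the inductive construction never uses the lists to decide the \emph{order}: the order is determined purely by the sequence of vertex deletions (append the edges at the deleted vertex to the tail at each stage), and only the \emph{weights} depend on the lists. Hence I would phrase the induction so that it produces, for each multigraph, a fixed deletion order of the vertices — equivalently a fixed ordering of $E(M)$ — and then argue that for that fixed ordering and \emph{any} lists $L_e$ the weight-selection steps above always succeed. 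Since each selection step only ever rules out a bounded number of values and we always retained enough binary freedom, this decoupling causes no difficulty, and the proof closes by the same counting inequalities.
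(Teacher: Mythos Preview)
Your approach differs from the paper's. Rather than rerun the vertex-deletion induction on $M$, the paper passes to the underlying \emph{simple} graph $G$ of $M$, applies Theorem~\ref{word} to $G$, and then appends all parallel edges $E(M)\setminus E(G)$ to the tail of the resulting ordering. For any $uv\in E(M)$ with $d_G(u)\ge d_G(v)\ge 2$, prefix distinction is already witnessed within the first $d_G(v)$ positions, so appending parallel edges is harmless; the only care needed is at vertices $x$ with $d_G(x)=1$, where one parallel edge weight is chosen to avoid a single forbidden second entry. This reduction sidesteps all multigraph-specific difficulties.

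Your direct induction, as written, has a genuine gap. The claim ``since $\delta(M')\geq d-1$ each neighbour's sequence has length at least $d-1$'' is false for multigraphs: if $x$ has $\mu>1$ parallel edges to some neighbour $y$, then $d_{M'}(y)=d_M(y)-\mu$ can drop well below $d-1$. In that case the $(d-1)$-prefix of $c(y)$ is \emph{not} determined by $c'(y)$ alone but depends on the very weights $w(xy^{(1)}),w(xy^{(2)}),\dots$ you are trying to choose, so the count ``$2^{d-1}$ choices versus $d$ forbidden prefixes'' no longer applies cleanly. The same issue appears in your $d=2$ case when both edges at $x$ go to the same $y$. Relatedly, ``$M-x$ is nice, exactly as in the simple-graph proof'' is too quick: in a simple graph each neighbour of $x$ loses at most one from its degree, which is what rules out new $K_2$ components; with parallel edges a neighbour can lose several, and one can build nice connected $M$ on four or more vertices in which some minimum-degree $x$ leaves $M-x$ with a two-vertex component. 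These issues may be repairable with additional case analysis, but they are not handled by ``mimic Theorem~\ref{word} verbatim''.

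Your observation that the ordering produced in the proof of Theorem~\ref{word} is in fact list-independent (being determined solely by the vertex-deletion sequence) is correct and is exactly what justifies the quantifier swap in the paper's reduction as well.
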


\begin{proof}
Let $G$ be the underlying simple graph of $M$ and apply Theorem \ref{word} to $G$; denote by $c$ the resulting colouring of $G$.  The edges of $E(M) \setminus E(G)$ will be assigned to the end of the ordering of $E(G)$.  For every $uv \in E(M)$ such that $d_G(u) \geq d_G(v) \geq 2$, any assignment of weights to edges in $E(M) \setminus E(G)$ preserves the prefix distinguishing vertex colouring.  Consider $x \in V(G)$ with $d_G(x) = 1$.  If $d_M(x) = 1$ as well, then any assignment of weights to the remaining edges will preserve the prefix-distinguishing vertex colouring.  If $d_M(x) \geq 2$, then let $e \in E(M) \setminus E(G)$ be incident to $x$ and let $y$ be the other end of $e$.  Choosing $w(e)$ different from the second entry in $c(y)$ preserves the prefix-distinguishing vertex colouring.
\end{proof}

The following corollary follows in the same manner as Corollary \ref{totalword}.

\begin{cor}
If $M$ is a nice multigraph, then there is an ordering of $E(M) \cup V(M)$ such that, for any assignment of lists of size $2$ to the edges and vertices of $M$, there exists a total $2$-list-weighting $w$ which gives a prefix distinguishing vertex colouring by sequences.
\end{cor}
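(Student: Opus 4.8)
The plan is to imitate the reduction that established Corollary~\ref{totalword}, encoding each vertex weight as the weight of an added pendant edge so that the total-weighting problem on $M$ becomes an ordinary edge-weighting problem on a larger multigraph, to which the multigraph analogue of Theorem~\ref{word} proved immediately above can be applied. Concretely, from $M$ I would build a loopless multigraph $M^+$ by attaching to each $v \in V(M)$ a new degree-one vertex $\ell_v$, joined to $v$ by a single new edge $f_v$. Crucially, $M^+$ depends only on the structure of $M$ and not on any list assignment, so I may first invoke the preceding theorem to fix, once and for all, an ordering of $E(M^+)$ for which \emph{every} assignment of lists of size $2$ to $E(M^+)$ admits a prefix distinguishing edge $2$-list-weighting.

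Now, given an arbitrary assignment of size-$2$ lists to $E(M) \cup V(M)$, I would transport the edge lists unchanged to the corresponding edges of $M^+$ and give each pendant edge $f_v$ the list $L_v$ that was assigned to the vertex $v$. The fixed ordering of $E(M^+)$ together with the preceding theorem then produces weights $w(e) \in L_e$ for all $e \in E(M^+)$ yielding a prefix distinguishing colouring of $V(M^+)$. Interpreting $w(f_v)$ as the weight of the vertex $v$, and placing $v$ in the ordering of $E(M) \cup V(M)$ at the position occupied by $f_v$ in the ordering of $E(M^+)$, I obtain simultaneously an ordering of $E(M) \cup V(M)$ that does not depend on the lists and a total $2$-list-weighting of $M$.

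It then remains to verify that the induced total-weighting colouring of $V(M)$ is correct. By construction, the sequence that the total weighting assigns to each $v \in V(M)$ is identical to the sequence assigned to $v$ as a vertex of $M^+$: the only extra entry, $w(f_v)$, occupies precisely the slot reserved for the vertex weight of $v$. Moreover, two vertices adjacent in $M$ remain adjacent in $M^+$, and in $M^+$ every original vertex has degree at least $2$ (each gained a pendant edge). Hence the prefix distinguishing property inherited from $M^+$ ensures that for adjacent $u,v \in V(M)$ neither sequence is a prefix of the other, so the colouring is prefix distinguishing and, in particular, proper.

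The point demanding the most care, and the one I expect to be the main obstacle, is the order of quantifiers: the ordering of $E(M) \cup V(M)$ must be committed to before the lists are revealed, which is exactly why it is essential that the ordering supplied by the multigraph theorem works uniformly over all list assignments of $M^+$ and that $M^+$ is determined by $M$ alone. The only remaining technicality is to ensure that $M^+$ is itself nice and loopless: attaching a leaf to an isolated vertex of $M$ would create a $K_2$ component, so any such trivial vertex --- which is vacuously properly coloured --- should be set aside beforehand and given an arbitrary weight, with the construction applied only to the remaining multigraph.
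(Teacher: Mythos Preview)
Your proposal is correct and follows essentially the same approach as the paper, which simply remarks that the corollary ``follows in the same manner as Corollary~\ref{totalword}'' (i.e., attach a pendant edge to each vertex and apply the preceding multigraph theorem). You are in fact more careful than the paper about the order of quantifiers and the isolated-vertex technicality needed to keep $M^+$ nice.
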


\subsection{Colouring by sequences for any $E(G)$ ordering}

We now turn our attention to the problem of determining $\We(G)$, $\Wt(G)$, $\chWe(G)$, and $\chWt(G)$ for a graph $G$.  Each bound is clearly bounded above by its multiset counterpart (i.e. $\We(G) \leq \me(G))$.  
In general, it is not clear for which graphs these bounds are tight.  For example, $\We(C_3) = 2$ and $\me(C_3) = 3$, whereas $\We(C_4) = \me(C_4) = 2$ and $\We(C_5) = \me(C_5) = 3$.

The following proposition follows from two bounds stated in the introduction -- $\me(G) \leq 4$ for every nice graph and $\St(G) \leq 3$ for every graph.

\begin{prop}
If a graph $G$ is nice then $\We(G) \leq 4$.  For any graph $G$, $\Wt(G) \leq 3$.
\end{prop}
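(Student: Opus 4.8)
The plan is to reduce the problem of colouring by sequences for every ordering to the already-understood problem of colouring by multisets, and then to invoke the two bounds recorded in the introduction: $\me(G) \le 4$ for nice $G$ and $\St(G) \le 3$ for arbitrary $G$. The whole argument rests on a single monotonicity observation, after which the result is a matter of chaining the cited bounds.

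The central observation is that any edge weighting which is a proper vertex colouring by multisets is automatically a proper vertex colouring by sequences for \emph{every} ordering of $E(G)$. To establish this, I would fix such a weighting $w$, fix an arbitrary total order $\prec$ on $E(G)$, and take an arbitrary adjacent pair $u,v$. The sequence assigned to $u$ is obtained by arranging the multiset of weights incident to $u$ according to $\prec$, and likewise for $v$; in particular the underlying multiset of each sequence is exactly the multiset colour of that vertex. Since $w$ is a proper colouring by multisets, these two multisets differ, and two sequences whose underlying multisets differ cannot be equal. Hence $c(u) \ne c(v)$ for every ordering, so $w$ witnesses a proper colouring by sequences for every ordering. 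This yields $\We(G) \le \me(G)$, and the identical argument applied to total weightings gives $\Wt(G) \le \mt(G)$.

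With the reduction in hand, the first claim is immediate: for nice $G$ we have $\We(G) \le \me(G) \le 4$ by the bound of \cite{AADR05}. For the total version, I would first note that $\mt(G) \le \St(G)$, since a proper colouring by sums via a total weighting is in particular a proper colouring by multisets (equal multisets force equal sums, so distinct sums force distinct multisets). Combining this with $\St(G) \le 3$ from \cite{Kal} yields $\Wt(G) \le \mt(G) \le \St(G) \le 3$ for every graph $G$.

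There is no substantial obstacle here; essentially all of the content lies in the observations $\We(G) \le \me(G)$ and $\Wt(G) \le \mt(G)$. The one point that requires care is the direction of the implication: one must verify that distinct multisets force distinct sequences (the easy direction), and resist the temptation to assert the converse, which in fact fails, as the examples $\We(C_3) = 2 < 3 = \me(C_3)$ already show. Everything beyond the reduction is a direct appeal to the cited bounds.
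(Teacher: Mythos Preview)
Your proposal is correct and matches the paper's approach exactly: the paper states just before the proposition that ``each bound is clearly bounded above by its multiset counterpart (i.e.\ $\We(G) \leq \me(G)$)'' and then derives the proposition directly from the cited bounds $\me(G)\le 4$ and $\St(G)\le 3$. You have simply spelled out the ``clearly'' in more detail, including the intermediate chain $\Wt(G)\le \mt(G)\le \St(G)$ that the paper also records in the introduction.
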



We make the following conjectures, in light of the conjectures stated in the opening section:

\begin{conj}\label{We}
If $G$ is a nice graph then $\chWe(G) \leq 3$.
\end{conj}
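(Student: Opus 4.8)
The plan is to fix the adversary's choices and argue pointwise. Since $\chWe(G) = \max_{\prec}(\ldots)$ over all total orders $\prec$ of $E(G)$, it suffices to show that for \emph{every} order $\prec$ and every assignment of lists $L_e$ with $|L_e|=3$, one can choose $w(e)\in L_e$ so that adjacent vertices receive distinct sequences. I would draw each $w(e)$ independently and uniformly from $L_e$ and aim for a Lov\'asz Local Lemma argument. The first reduction is that two adjacent vertices of different degrees produce sequences of different lengths and so never collide; hence the only obstructions are the events $A_{uv}$, one for each edge $uv$ with $d(u)=d(v)$, that $c(u)=c(v)$.

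Next I would bound $\pr[A_{uv}]$. If $d(u)=d(v)=d$, then $c(u)=c(v)$ forces the two $\prec$-sorted weight sequences to agree in every position. The shared edge $uv$ occupies one position in each sequence; the remaining $d-1$ positions pair \emph{distinct} non-shared edges of $u$ with \emph{distinct} non-shared edges of $v$, and the corresponding equality events are independent since they involve pairwise disjoint edge sets. As each equality of two independent uniform choices from size-$3$ lists has probability at most $\tfrac13$, this yields $\pr[A_{uv}]\le 3^{-(d-1)}$ (and strictly less when the shared edge sits at different positions in the two sequences). The crucial feature is the geometric decay in the common degree $d$.

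With these bounds I would attempt the general (asymmetric) Lov\'asz Local Lemma, giving each $A_{uv}$ of common degree $d$ a parameter of order $3^{-(d-1)}$. Two events are dependent only when they share an incident edge, so $A_{uv}$ has $O(\D^2)$ neighbours in the dependency graph; for high-degree pairs the decay $3^{-(d-1)}$ easily dominates this growth. \emph{The hard part will be the low-degree events.} A pair of degree-$2$ vertices has $\pr[A_{uv}]\le\tfrac13$, which is simply too large: a degree-$2$ vertex adjacent to a high-degree vertex (equivalently, a high-degree vertex with many degree-$2$ neighbours) forces the local-lemma inequality to fail. This is the same bottleneck that makes the list versions of $1$-$2$-$3$-type problems resist a direct probabilistic attack; note in particular that $\chWe(G)\le\chme(G)$, since distinct multisets give distinct sequences under every order, so a one-shot local-lemma argument cannot be expected to succeed unaltered. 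What gives me some optimism is the extra slack absent from the multiset version: an equal-multiset adjacent pair still receives distinct sequences unless the order aligns their shared edge, which is exactly why $\We(C_3)=2$ beats $\me(C_3)=3$.

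To exploit this I would split the graph into two regimes. The degree-$2$ vertices induce a disjoint union of paths and cycles, on which the only constraints are between length-$2$ sequences; I would colour this subgraph first, directly from the size-$3$ lists, adapting the cyclic argument of Proposition~\ref{wordcycle} (which needs only two values when the order may be chosen, and which I expect extends to \emph{any} fixed order once a third value is available). The vertices of degree at least $3$, where $\pr[A_{uv}]\le\tfrac19$, would then be handled by the local lemma, ideally through a Moser--Tardos-style resampling restricted to the high-degree events, or a lopsided/modified local lemma, so that corrections near a low-degree interface do not cascade. Coupling the two phases, so that re-weighting the dense part respects the already-fixed sparse part, is where I expect the genuine difficulty to lie; a complete proof of the conjecture very likely needs a new idea at precisely this degree-$2$ interface rather than a routine application of any single tool above.
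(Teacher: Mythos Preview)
The statement you are attempting to prove is stated in the paper as a \emph{conjecture}, not a theorem; the paper does not prove it. What the paper does prove is the partial result Theorem~\ref{sequencecolouring}: if $\delta > \log_3(2\Delta^2 - 2\Delta + 1) + 2$ then $\chWe(G) \leq 3$, via precisely the Symmetric Local Lemma argument you sketch (uniform random choice from each $L_e$, bad event $A_{uv}$ with $\pr(A_{uv})\le 3^{-(\delta-1)}$, dependency $\le 2\Delta(\Delta-1)$). Your reductions and probability bound are correct and match the paper's. Your diagnosis that low-degree pairs are the obstruction is also exactly why the paper stops at a conjecture.

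However, your proposed two-phase remedy has a concrete gap beyond the coupling issue you already flag. You propose to peel off the degree-$2$ subgraph and handle ``vertices of degree at least $3$, where $\pr[A_{uv}]\le\tfrac19$'' by the Local Lemma. This does not work: an event between two degree-$3$ vertices has probability at most $1/9$, but even in a $3$-regular graph its dependency set has size on the order of $2\Delta(\Delta-1)=12$, and $1/9 \not< 1/(e\cdot 13)$. The paper itself records that the Local Lemma yields only $\chWe(G)\le 6$ for $3$-regular graphs, $\le 5$ for $4$-regular, and $\le 4$ for $5$-regular. The asymmetric version does not rescue this, since in a $3$-regular graph all events have the same probability and the same dependency degree. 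So the obstruction is not confined to degree~$2$; degrees $3$, $4$, and $5$ are equally out of reach of a direct Local Lemma argument, and separating off only the degree-$2$ piece cannot close the gap. Your final sentence is therefore the honest summary: a proof of the full conjecture would require a genuinely new idea, and neither your proposal nor the paper supplies one.
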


\begin{conj}\label{Wt}
For any graph $G$, $\chWt(G) \leq 2$.
\end{conj}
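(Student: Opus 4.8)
The plan is to convert the quantification over all orderings into a static multiset condition and then to solve the resulting list weighting problem. First I would establish a sufficient condition: if a total weighting $w$ assigns to every pair of adjacent vertices $u,v$ with $d(u)=d(v)$ \emph{distinct} multisets $\lmulti w(u)\rmulti \cup \lmulti w(e): e\ni u\rmulti$ and $\lmulti w(v)\rmulti \cup \lmulti w(e): e\ni v\rmulti$, then $w$ colours $V(G)$ by sequences properly for \emph{every} ordering of $V(G)\cup E(G)$. This is immediate: adjacent vertices of different degree yield sequences of different lengths and so never collide, while adjacent vertices of equal degree yield sequences that are rearrangements of distinct multisets and so collide under no ordering. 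Hence it suffices to show that, for any assignment of lists of size $2$ to $V(G)\cup E(G)$, one can choose weights so that adjacent vertices of equal degree receive distinct multisets. Via the leaf-augmentation construction used to prove Corollary \ref{totalword} (encode $w(v)$ as the weight of a pendant edge at $v$), this is an edge version of the same multiset-distinguishing statement on a graph $H$ in which every original vertex carries a pendant leaf; the auxiliary leaves impose no constraints, so only adjacent original vertices of equal degree matter, and the pendant weight is precisely the extra coordinate separating the total target $2$ here from the edge target $3$ of Conjecture \ref{We}.

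Next I would attack the multiset-distinguishing statement by induction on $|V(G)|$, mirroring the proof of Theorem \ref{word}. Remove a vertex $x$ of minimum degree, weight $G-x$ by the induction hypothesis, and then choose $w(x)$ together with the weights of the edges at $x$ from their size-$2$ lists. The constraints to satisfy are between $x$ and its equal-degree neighbours, and between pairs of neighbours of $x$ whose degrees and multisets have both been perturbed by the reattachment of $x$. The vertex weight $w(x)$ (equivalently, the pendant edge at $x$ in $H$) should be held back as a final degree of freedom, since toggling it changes $x$'s multiset by a single element; similarly, each newly inserted edge at $x$ shifts exactly one element of a neighbour's multiset.

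The main obstacle is exactly this inductive extension under lists of size exactly $2$. Reattaching $x$ raises the degree of every neighbour by one and inserts one new element into each neighbour's multiset, so it can simultaneously create new equal-degree adjacencies among the neighbours of $x$ and disturb pairs that were previously separated; with only a binary choice available for $w(x)$ and for each edge at $x$, these coupled constraints need not be independently resolvable, and a naive greedy extension fails — this is the same tightness that keeps the ordinary List $1$-$2$ Conjecture open. I expect that overcoming it requires either a strengthened inductive invariant, an analogue of the prefix-distinguishing property of Theorem \ref{word} that stores enough slack at each vertex to absorb the reattachment, or an algebraic argument in the spirit of the digraph result \cite{Ben2}: encode distinctness of same-degree neighbours as the non-vanishing of a graph polynomial and apply the Combinatorial Nullstellensatz, exploiting that multiset-distinctness is strictly weaker than sum-distinctness to secure the monomial of individual degree at most $1$ in each variable that size-$2$ lists demand. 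Designing a polynomial that faithfully captures multiset rather than sum distinctness is, I believe, the crux of the matter.
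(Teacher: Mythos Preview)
This statement is a \emph{conjecture} in the paper, not a theorem; the paper offers no proof of it. What the paper does prove is the partial result Theorem~\ref{totalsequencecolouring}: $\chWt(G)\le 2$ whenever $\delta>\log_2(e(2\Delta^2-2\Delta+1))$, established by the Symmetric Local Lemma applied to a uniformly random choice of weights from the lists. So there is no paper proof to compare against; you are attacking an open problem.

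Your reduction contains a genuine overreach. You correctly note that if a single weighting $w$ gives every equal-degree adjacent pair distinct \emph{multisets}, then $w$ works for every ordering. But the definition of $\chWt(G)$ lets the weighting depend on the ordering: one only needs, for each ordering~$\prec$, \emph{some} weighting $w_\prec$ from the lists that distinguishes sequences under~$\prec$. By seeking one $w$ good for all orderings at once you are proposing to prove the list analogue of $\mt(G)\le 2$, which is strictly stronger than Conjecture~\ref{Wt}. Since even the non-list bound $\mt(G)\le 2$ is open (the best known is $\mt(G)\le 3$, via $\St(G)\le 3$), you have exchanged one open problem for a harder one. You yourself diagnose the resulting inductive step as facing ``the same tightness that keeps the ordinary List 1-2 Conjecture open''---that is a symptom of having discarded, at the very first step, the one piece of structure (the fixed ordering~$\prec$) that makes sequence-colouring potentially easier than multiset-colouring. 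A route to Conjecture~\ref{Wt} that avoids the full List 1-2 Conjecture would presumably have to \emph{use}~$\prec$ rather than quantify it away.
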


Conjectures \ref{We} and \ref{Wt} are verified here for $d$-regular graphs of sufficiently large degree and for general graphs with $\delta(G)$ sufficiently large in terms of $\Delta(G)$.

We begin with a few necessary definitions.   
For a set of events $\{A_i : i \in I\}$ in a probability space and a subset $K \subseteq I$, define $A_K := \bigcap_{i \in K} A_i$ and $\overline{A}_K := \bigcap_{i \in K} \overline{A}_i$.
Let $J \subseteq I \setminus \{i\}$.  
The event $A_i$ is {\bf mutually independent} of the set of events $\{A_i \,:\, i \in J\}$ if, for every set $J' \subseteq J$,
$$\pr(A_i \cap A_{J'}) = \pr(A_i) \times \pr(A_{J'}),$$
or, equivalently,
$$\pr(A_i\,\,\vline\,\, A_{J'}) = \pr(A_i).$$

The main tool which will be used is the well known and powerful Lov\'asz Local Lemma.

\begin{LLL}[Erd{\H o}s, Lov\'asz \cite{LLL}]
Let $\{A_i : i \in I\}$ be events in a probability space, and for each $A_i$ let $J_i \subseteq I$ be a set of indices such that $A_i$ is mutually independent of $\{A_j \,:\, j \notin J_i \cup \{i\}\}$.  
If there exist real numbers $0 < x_i < 1$ for each $i \in I$ such that $\pr(A_i) < x_i \prod_{j \in J_i}(1-x_j),$ then $$\pr\left( \overline{A}_I \right) \geq \prod_{i \in I}(1-x_i) > 0.$$
\end{LLL}

For an event $A_i$, the set $J_i$ indicated in the Lov\'asz Local Lemma is called the {\bf dependency set} of $A_i$.  
If the maximum size of a dependency set, taken over all $A_i$, is $D$, then setting $x_i = \frac{1}{D+ 1}$ in the Lov\'asz Local Lemma for each $i \in I$ gives the symmetric version of the Local Lemma.

\begin{SLL}[Spencer \cite{Spencer}]
Let $\{A_i : i \in I\}$ be a set of events in a probability space, and for each $A_i$ let $J_i \subseteq I$ be a set of indices such that $A_i$ is mutually independent of $\{A_j \,:\, j \notin J_i \cup \{i\}\}$.  If $|J_i| \leq D$ for all $i \in I$ and $\pr(A_i) <  \frac{1}{e(D + 1)}$ for all $i \in I$, then $\pr\left( \overline{A}_I \right) > 0.$
\end{SLL}




Let $\{A_e : i \in E(G)\}$ be a set of events in a probability space which are indexed by the edge set of a graph $G$.  We say that an edge $e \in E(G)$ is {\bf covered} by the event $A_{uv}$ if $e$ is incident to either $u$ or $v$, and {\bf uncovered} otherwise.

By applying the Symmetric Local Lemma, a bound for the list variation of Problem \ref{prob2} can be obtained:

\begin{thm}\label{sequencecolouring}
Let $G$ be a graph with minimum degree $\delta$ and maximum degree $\Delta$.  If $\delta > \log_3(2\Delta^2 - 2\Delta + 1) + 2$ then $\chWe(G) \leq 3$.
\end{thm}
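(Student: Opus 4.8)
The plan is to use the Symmetric Local Lemma (SLL) with a random weighting. Since the ordering of $E(G)$ is fixed and arbitrary, the sequence $c(v)$ assigned to a vertex $v$ is determined by the weights on the edges incident to $v$, read off in the fixed edge-order. For each edge $uv \in E(G)$ I would define a ``bad'' event $A_{uv}$ to be the event that $c(u) = c(v)$, i.e. that the endpoints of $uv$ receive the same sequence. The goal is to show that with positive probability none of these events occur, which yields a proper colouring by sequences for the given ordering and hence bounds $\chWe(G)$.

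\medskip
\noindent\textbf{Setting up the random experiment.} For each edge $e$ let $L_e$ be its list of three real numbers, and choose $w(e) \in L_e$ independently and uniformly at random. First I would bound $\pr(A_{uv})$. If $c(u) = c(v)$, then $u$ and $v$ must have the same degree, say $k$, and the two length-$k$ sequences must agree entry-by-entry. The single edge $uv$ contributes a common entry to both sequences (in the same position relative to each, since the order is fixed), so the constraint really comes from matching up the remaining $k-1$ edges at $u$ with the $k-1$ edges at $v$ in the positions dictated by $\prec$. Since the edges at $u$ other than $uv$ are distinct from the edges at $v$ other than $uv$ (as $G$ is simple), these $2(k-1)$ edge-weights are independent, and forcing $k-1$ coordinate equalities between two independent groups of variables happens with probability at most $(1/3)^{k-1} \leq (1/3)^{\delta - 1}$. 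Thus $\pr(A_{uv}) \leq 3^{-(\delta-1)}$.

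\medskip
\noindent\textbf{Bounding the dependency degree.} The event $A_{uv}$ depends only on the weights of edges incident to $u$ or $v$ — exactly the edges \emph{covered} by $A_{uv}$ in the paper's terminology. Two events $A_{uv}$ and $A_{xy}$ are independent unless they share a covered edge, i.e. unless $\{u,v\}$ and $\{x,y\}$ are joined by some edge or overlap. The number of edges covered by $A_{uv}$ is at most $2\Delta - 1$ (each of $u,v$ has at most $\Delta$ incident edges, with $uv$ counted once), and each covered edge lies in at most... more carefully, each edge $e$ covered by $A_{uv}$ gives rise to at most $\Delta - 1$ further edges sharing an endpoint with it other than $uv$ itself. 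Counting the events in the dependency set of $A_{uv}$ as those indexed by edges meeting the closed neighbourhood structure of $uv$, one gets $|J_{uv}| \leq D$ where $D = 2\Delta^2 - 2\Delta$; this is precisely the quantity $2\Delta^2 - 2\Delta + 1 = D + 1$ appearing in the hypothesis.

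\medskip
\noindent\textbf{Applying SLL.} The Symmetric Local Lemma succeeds provided $\pr(A_{uv}) < \tfrac{1}{e(D+1)}$. Using $\pr(A_{uv}) \leq 3^{-(\delta-1)}$ and $D+1 = 2\Delta^2 - 2\Delta + 1$, the condition reduces, after taking $\log_3$ and absorbing the $\log_3 e$ term, to $\delta - 1 > \log_3(2\Delta^2 - 2\Delta + 1) + 1$, which is exactly the stated hypothesis $\delta > \log_3(2\Delta^2 - 2\Delta + 1) + 2$ (the extra additive constant swallows $\log_3 e < 1$). Hence $\pr(\overline{A}_{E(G)}) > 0$, so a valid weighting exists for every list assignment and every ordering, giving $\chWe(G) \leq 3$.

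\medskip
\noindent\textbf{Main obstacle.} I expect the delicate step to be the probability bound $\pr(A_{uv}) \leq 3^{-(\delta-1)}$ rather than the clean ``all coordinates equal'' count. The subtlety is that $uv$ itself occupies a fixed position in \emph{both} sequences, so one must verify that the common entry it contributes does not reduce the number of genuinely independent coordinate-matching constraints below $k-1$, and that when $d(u) \neq d(v)$ the event is simply impossible (sequences of different length), which only helps. One must also confirm that the remaining incident edges at $u$ and at $v$ are genuinely disjoint sets of random variables — true precisely because $G$ is simple and loopless — so that the $k-1$ equalities are independent events each of probability at most $1/3$. Getting the dependency-degree count $D = 2\Delta^2 - 2\Delta$ exactly right (so that $D+1$ matches the hypothesis) is the other place to be careful.
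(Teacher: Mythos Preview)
Your proposal is correct and follows essentially the same route as the paper: random uniform choice from each list, bad events $A_{uv}=\{c(u)=c(v)\}$ with $\pr(A_{uv})\le 3^{-(\delta-1)}$, dependency degree $D\le 2\Delta(\Delta-1)$, and the Symmetric Local Lemma yielding exactly the stated hypothesis. One small correction: your parenthetical ``in the same position relative to each, since the order is fixed'' is not true in general---$uv$ can sit at position $r$ in $c(u)$ and position $s\neq r$ in $c(v)$---but this only helps, since in that case the matching constraints force $d$ independent equalities rather than $d-1$, giving $\pr(A_{uv})\le 3^{-\delta}$ and the bound $3^{-(\delta-1)}$ still holds.
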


\begin{proof}
For each $e \in E(G)$, let $L_e$ be a list of $3$ elements associated with $e$.  Fix an arbitrary ordering of $E(G)$.  Choose $w(e)$ randomly from $L_e$ with uniform probability and let $c(u)$ denote the resulting sequence of weights of edges incident to $u \in V(G)$.  For an edge $uv \in E(G)$, let $A_{uv}$ denote the event that $c(u) = c(v)$; we see that $\pr(A_{uv}) \leq 1/3^{\delta-1}$.

Let $J_{uv} \subset E(G) \setminus \{uv\}$ be the set of edges where $j \in J_{uv}$ if and only if $A_j$ covers $uv$ or an edge incident to $u$ or $v$, and $j \neq {uv}$; 
$A_{uv}$ is independent of $\{A_e : e \notin J_{uv} \cup \{uv\} \}$ since no edge incident to $uv$ will have a weight determined by an event $A_e$ with $e \notin J_{uv}$.  Hence, $D = \max\{|J_e| : e \in E(G)\} \leq 2(\Delta-1) + 2(\Delta-1)^2 = 2\Delta(\Delta-1)$.  By the Symmetric Local Lemma, the result holds if 
\begin{eqnarray*}
\dfrac{1}{3^{\delta-1}} < \dfrac{1}{e(2\Delta(\Delta-1)+1)},
\end{eqnarray*}
which is satisfied if $\delta > \log_3(2\Delta^2 - 2\Delta + 1) + 2$.
\end{proof}

If two adjacent vertices have distinct degrees, then their associated sequences will certainly differ.  Hence, regular graphs are of particular interest.  The following corollary is easily obtained from Theorem \ref{sequencecolouring}.

\begin{cor}\label{sequencecolouringreg}
If $G$ is a $d$-regular graph, $d \geq 6$, then $\chWe(G) \leq 3$.
\end{cor}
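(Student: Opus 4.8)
The goal is to derive Corollary~\ref{sequencecolouringreg} from Theorem~\ref{sequencecolouring}, so the plan is simply to verify that a $d$-regular graph with $d \geq 6$ satisfies the hypothesis of the theorem. For a $d$-regular graph we have $\delta = \Delta = d$, so the condition $\delta > \log_3(2\Delta^2 - 2\Delta + 1) + 2$ becomes the single-variable inequality
\begin{equation*}
d > \log_3(2d^2 - 2d + 1) + 2,
\end{equation*}
which I would prove holds for all integers $d \geq 6$. Equivalently, rearranging and exponentiating base $3$, it suffices to show $3^{d-2} > 2d^2 - 2d + 1$.

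First I would check the base case $d = 6$ directly: the left side is $3^4 = 81$ and the right side is $2(36) - 12 + 1 = 61$, so $81 > 61$ holds. Then I would argue that the inequality persists for all larger $d$ by comparing growth rates: the left side $3^{d-2}$ grows by a factor of $3$ each time $d$ increases by $1$, whereas the quadratic $2d^2 - 2d + 1$ grows by a factor strictly less than $3$ once $d \geq 6$. Concretely, one can verify that $3(2d^2 - 2d + 1) > 2(d+1)^2 - 2(d+1) + 1$ for all $d \geq 1$ (this reduces to a routine quadratic inequality), so an easy induction on $d$ carries the base case forward to every $d \geq 6$.

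Since there is genuinely no obstacle here, the only thing to be careful about is the arithmetic of the base case and the direction of the inductive comparison. The result then follows because any edge weighting that properly colours the vertices of a regular graph by sequences is unaffected by the fact that adjacent vertices automatically share the same degree; regularity is precisely the hard case alluded to in the remark preceding the corollary, since in a non-regular setting adjacent vertices of distinct degree receive sequences of different lengths and are trivially distinguished. Thus applying Theorem~\ref{sequencecolouring} with $\delta = \Delta = d \geq 6$ immediately yields $\chWe(G) \leq 3$.
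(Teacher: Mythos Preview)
Your approach is exactly what the paper intends: it states only that the corollary ``is easily obtained from Theorem~\ref{sequencecolouring},'' and you carry out precisely that verification by setting $\delta=\Delta=d$ and checking $3^{d-2}>2d^2-2d+1$ for $d\ge 6$. One small slip: the inductive comparison $3(2d^2-2d+1)>2(d+1)^2-2(d+1)+1$ actually fails at $d=1$ (it gives $3<5$) and only holds for $d\ge 2$, but since your induction starts at $d=6$ this is harmless.
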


A similar argument gives $\chWe(G) \leq 4$ if $G$ is $5$-regular, $\chWe(G) \leq 5$ if $G$ is $4$-regular, and $\chWe(G) \leq 6$ if $G$ is $3$-regular.

We now consider total weightings.  Since the List 1-2 Conjecture implies that two weights should suffice for a proper colouring by sums, we consider total $2$-list-weightings.  The upper bound on the probability of a bad event is $1/2^{\delta}$ rather than $1/3^{\delta-1}$; the following bounds are obtained by similar arguments as those used to prove Theorem \ref{sequencecolouring} and Corollary \ref{sequencecolouringreg}.

\begin{thm}\label{totalsequencecolouring}
Let $G$ be a graph with minimum degree $\delta(G) = \delta$ and maximum degree $\Delta(G) = \Delta$.  If $\delta > \log_2(e(2\Delta^2 - 2\Delta + 1))$, then $\chWt(G) \leq 2$.  In particular, if $G$ is $d$-regular for $d \geq 9$, then $\chWt(G) \leq 2$.
\end{thm}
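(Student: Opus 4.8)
The plan is to mirror the probabilistic argument of Theorem \ref{sequencecolouring}, adapting the bad-event probability to the extra vertex weight and the smaller list size. I would assign to each vertex and each edge a list of size $2$, fix an arbitrary ordering of $V(G) \cup E(G)$, and choose every vertex weight and every edge weight independently and uniformly at random from its list. For $uv \in E(G)$ let $A_{uv}$ be the bad event that the two sequences agree, $c(u) = c(v)$; it then suffices, by the Symmetric Local Lemma, to bound $\pr(A_{uv})$ and the size of a dependency set and to check the resulting inequality.

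First I would bound $\pr(A_{uv})$. Since $c(u)$ has length $d(u)+1$ and $c(v)$ has length $d(v)+1$ (the vertex weight contributes one extra entry to each sequence), the event $A_{uv}$ is impossible unless $d(u)=d(v)$. The key point is that, once the ordering is fixed, the position occupied by each token (the vertex weight or an incident edge weight) in a sequence is determined independently of the chosen weights. I would therefore expose all weights except the vertex weight $w(u)$ and the weights of the $d(u)-1$ edges incident to $u$ other than $uv$. After this conditioning the sequence $c(v)$ is completely determined, because every token of $v$ -- namely $w(v)$, the shared edge $uv$, and the $d(v)-1$ other edges at $v$ (each distinct from the held-out edges at $u$ in a simple graph) -- has been revealed. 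For $A_{uv}$ to occur, each of the $d(u)$ held-out tokens must take the single value dictated by the corresponding fixed entry of $c(v)$; as these tokens are chosen independently from lists of size $2$, this gives $\pr(A_{uv}) \leq 2^{-d(u)} \leq 2^{-\delta}$.

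Next I would bound the dependency degree. Exactly as in Theorem \ref{sequencecolouring}, $A_{uv}$ is mutually independent of the collection of events $A_{xy}$ whose token sets are disjoint from that of $A_{uv}$; sharing a vertex weight forces a shared endpoint and hence a shared incident edge, so introducing vertex weights does not enlarge the dependency structure, and one again obtains $D \leq 2(\Delta-1) + 2(\Delta-1)^2 = 2\Delta(\Delta-1)$. Feeding $\pr(A_{uv}) \leq 2^{-\delta}$ and $D+1 \leq 2\Delta^2 - 2\Delta + 1$ into the Symmetric Local Lemma, the hypothesis $\pr(A_{uv}) < \frac{1}{e(D+1)}$ becomes $2^{-\delta} < \frac{1}{e(2\Delta^2 - 2\Delta + 1)}$, which rearranges to the stated bound $\delta > \log_2(e(2\Delta^2 - 2\Delta + 1))$. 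Under this inequality $\pr(\overline{A}_{E(G)}) > 0$, so a suitable total $2$-list-weighting exists for the fixed ordering; since the ordering was arbitrary, $\chWt(G) \leq 2$.

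For the $d$-regular statement I would set $\Delta = \delta = d$ and verify numerically that $d > \log_2(e(2d^2 - 2d + 1))$ holds at $d = 9$ (and fails at $d = 8$), which is a short calculation. The only genuinely delicate point is the probability estimate: one must check that conditioning on all but the $d(u)$ held-out tokens really does pin down $c(v)$ entirely and that each held-out token is forced to a unique value, so that the $2^{-\delta}$ bound -- rather than a weaker $2^{-(\delta-1)}$ -- is justified. Everything else is a routine adaptation of the earlier argument.
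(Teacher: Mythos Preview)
Your proposal is correct and follows essentially the same approach as the paper, which explicitly states that the result is obtained by the argument of Theorem~\ref{sequencecolouring} with the bad-event probability bound replaced by $1/2^{\delta}$ instead of $1/3^{\delta-1}$. Your conditioning argument cleanly justifies the $2^{-\delta}$ bound (the extra vertex token exactly compensates for the drop from list size $3$ to $2$), and your observation that the dependency structure is unchanged by the addition of vertex weights is correct, since a shared vertex weight already forces a shared incident edge.
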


Finally, we examine $\chWe(M)$ and $\chWt(M)$ for a multigraph $M$.  An application of the Local Lemma shows that as long as the maximum edge multiplicity is no more than the minimum degree less a logarithmic term in terms of maximum degree, then the bounds in Conjectures \ref{We} and \ref{Wt} can be obtained for multigraphs.

\begin{thm}\label{multiseq}
Let $M$ be a loopless multigraph with maximum edge multiplicity $\mu(G) = \mu$, minimum degree $\delta(G) = \delta$, and maximum degree $\Delta(G) = \Delta$.
\begin{enumerate}
\item[(1)] If $\mu < \delta -1 - \log_3(2\Delta^2 - 2\Delta + 1) $, then $\chWe(M) \leq 3$.
\item[(2)] If $\mu < \delta - \frac{1}{2} - \log_2(2\Delta^2 - 2\Delta + 1)$, then $\chWt(M) \leq 2$.
\end{enumerate}
\end{thm}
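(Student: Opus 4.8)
The plan is to re-run the Lov\'asz Local Lemma argument of Theorems \ref{sequencecolouring} and \ref{totalsequencecolouring} essentially verbatim, the only new ingredient being a careful bound on the probability of a bad event that accounts for the parallel edges joining two adjacent vertices. For part (1), fix an arbitrary ordering of $E(M)$, assign to each edge $e$ a list $L_e$ of three reals, and choose $w(e) \in L_e$ uniformly and independently; let $c(u)$ be the induced sequence and, for each $uv \in E(M)$, let $A_{uv}$ be the event $c(u) = c(v)$. I would first record that $A_{uv}$ depends only on the weights of edges incident to $u$ or $v$, so that the dependency count is unchanged from the simple-graph case: the same reasoning yields $D \le 2(\Delta-1) + 2(\Delta-1)^2 = 2\Delta(\Delta-1)$, where degrees are now counted with multiplicity, so the bound continues to hold verbatim in terms of $\Delta$.

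The genuinely new step is the estimate of $\pr(A_{uv})$. Let $m_{uv} \le \mu$ be the number of parallel edges joining $u$ and $v$. Exactly $d(u) - m_{uv} \ge \delta - \mu$ of the edges incident to $u$ are not incident to $v$, and I would condition on the weights of every other edge. Under this conditioning $c(v)$ is fully determined, since none of the free edges touch $v$, while $c(u)$ has $d(u) - m_{uv}$ free positions whose entries must independently match the corresponding fixed entries of $c(v)$, each with probability at most $1/3$. Hence $\pr(A_{uv}) \le 3^{-(\delta-\mu)}$, which reduces to the bound $3^{-(\delta-1)}$ of Theorem \ref{sequencecolouring} when $\mu = 1$. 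Applying the Symmetric Local Lemma, it suffices that $3^{-(\delta-\mu)} < 1/\bigl(e(2\Delta^2 - 2\Delta + 1)\bigr)$; taking logarithms and using $\log_3 e < 1$ shows this is implied by $\mu < \delta - 1 - \log_3(2\Delta^2 - 2\Delta + 1)$, which gives (1).

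For part (2) I would repeat this with a total $2$-list-weighting, also assigning each vertex a weight from a list of two and inserting the vertices into the ordering of $E(M) \cup V(M)$. The free coordinates at $u$ now consist of the $d(u) - m_{uv}$ edges not meeting $v$ together with the single vertex weight of $u$, so there are at least $\delta - \mu + 1$ of them, each matching with probability at most $1/2$; thus $\pr(A_{uv}) \le 2^{-(\delta-\mu+1)}$. The dependency bound is identical, and the Symmetric Local Lemma applies once $2^{-(\delta-\mu+1)} < 1/\bigl(e(2\Delta^2 - 2\Delta + 1)\bigr)$. Since $\log_2 e < \tfrac{3}{2}$, this holds whenever $\mu < \delta - \tfrac{1}{2} - \log_2(2\Delta^2 - 2\Delta + 1)$, giving (2).

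The main obstacle lies entirely in the probability estimate: one must handle the up-to-$\mu$ parallel edges between $u$ and $v$ correctly, as these are precisely the edges incident to $u$ whose weights also appear in $c(v)$ and therefore cannot serve as independent free coordinates. This is what forces the exponent down from $\delta - 1$ (resp.\ $\delta$) to $\delta - \mu$ (resp.\ $\delta - \mu + 1$). Everything else is a transcription of the two earlier proofs, and the clean constants $1$ and $\tfrac{1}{2}$ appearing in the hypotheses are exactly the roundings of $\log_3 e$ and $\log_2 e$ needed to absorb the factor $e$ produced by the Symmetric Local Lemma.
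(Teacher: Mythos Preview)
Your proposal is correct and follows essentially the same route as the paper: random list weighting, the probability bound $\pr(A_{uv})\le k^{-(\delta-\mu)}$ (resp.\ $k^{-(\delta-\mu+1)}$ in the total case), the unchanged dependency bound $D\le 2\Delta(\Delta-1)$, and the Symmetric Local Lemma. Your explicit conditioning argument for the probability estimate and your observation that the constants $1$ and $\tfrac12$ absorb $\log_3 e$ and $\log_2 e$ are exactly the details underlying the paper's terse computation.
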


\begin{proof}
(1) Let $L_e$ be a list of $3$ elements associated with the edge $e$.  Fix an arbitrary ordering of $E(G)$.  For an edge $e$, choose its weight $w(e)$ randomly from $L_e$ with uniform probability.   Let $c(u)$ denote the resulting sequence of weights of edges incident to $u \in V(G)$.  For an edge $e=uv$, let $A_e$ denote the event that $c(u) = c(v)$
If $u, v \in V(G)$ are adjacent vertices, and $l$ is the number of edges between them, then $$\pr(A_{uv}) \leq 1/k^{\delta - l} \leq 1/k^{\delta - \mu}.$$
The size of the dependency set $J_e$ is the number of adjacent pairs of vertices from which one vertex is of distance at most one from $u$ or $v$, and hence the maximum size of a dependency set is $D \leq 2\Delta(\Delta-1)$.  By the Symmetric Local Lemma, the first result holds if 
\begin{align*}
\dfrac{1}{3^{\delta - \mu}} < \dfrac{1}{e(2\Delta(\Delta-1) + 1)}.
\end{align*}
(2) Applying the same argument to a random list total weighting from lists of size 2, we need to satisfy the following inequality:
\begin{align*}
\dfrac{1}{2^{\delta - \mu+1}} < \dfrac{1}{2\Delta(\Delta-1) + 1)}. & \qedhere
\end{align*}
\end{proof}

By considering edge $k$-weightings rather than edge $k$-list-weightings one can reduce the size of a bad event's dependency set in graphs with no short cycles, and hence obtain improved bounds.

Given an ordering of the edges of a graph $G$, denote by $e^u_i$ the $i^{\textrm{th}}$ edge incident to $u$ with respect to the ordering of $E(G)$.  A set of events $K \subseteq \{A_e : e \in E(G) \setminus\{uv\}\}$ leaves the edge $uv \in E(G)$ {\bf open} if at least one of $\{e^u_i, e^v_i\} \setminus \{uv\}$ is left uncovered by $K$ for each $1 \leq i \leq \max\{\deg(u), \deg(v)\}$.

\begin{lem}\label{indevents}
Let $G$ be a graph with ordered edge set $E(G) = \{e_1, \ldots, e_m\}$.  Let $w: E(G) \rightarrow [k]$ be a random edge $k$-weighting where, for each $e \in E(G)$, $w(e)$ is chosen with uniform probability from $[k]$; denote by $c(u)$ the resulting sequence of edge weights associated with $u \in V(G)$.  For an edge $uv \in E(G)$, let $A_{uv}$ be the event that $c(u) = c(v)$.  If there exists a set of events $K \subseteq \{A_e : e \in E(G) \setminus\{uv\}\}$ such that $K$ leaves $uv$ open, then $A_{uv}$ is mutually independent of $K$.
\end{lem}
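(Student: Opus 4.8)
The plan is to prove the stronger statement that $A_{uv}$ is independent of the weights of \emph{all} edges covered by $K$, and then read off mutual independence as a formal consequence. Write $\mathcal{C}$ for the set of edges covered by some event of $K$. Each $A_{xy}\in K$ depends only on the weights of edges incident to $x$ or $y$, and all of these lie in $\mathcal{C}$; hence every event of $K$, and every intersection $\bigcap_{A\in K'}A$ with $K'\subseteq K$, is a function of $\{w(f):f\in\mathcal{C}\}$. Thus it suffices to exhibit a set of edges containing $\mathcal{C}$ such that conditioning on the weights of those edges does not change $\pr(A_{uv})$: this yields $\pr\!\big(A_{uv}\cap\bigcap_{A\in K'}A\big)=\pr(A_{uv})\pr\!\big(\bigcap_{A\in K'}A\big)$ for every $K'\subseteq K$, which is precisely the definition of mutual independence.

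First I would dispose of the trivial case $\deg(u)\neq\deg(v)$: then $c(u)$ and $c(v)$ have different lengths, so $A_{uv}$ is impossible and independent of everything. So assume $\deg(u)=\deg(v)=d$ and write $A_{uv}=\bigcap_{i=1}^{d}\{w(e^u_i)=w(e^v_i)\}$, a conjunction of $d$ equality constraints on the edges incident to $u$ or $v$. The structural input is the open hypothesis, which for each $i$ supplies an uncovered edge $f_i\in\{e^u_i,e^v_i\}\setminus\{uv\}$; call these the \emph{witnesses}. I would first check that $f_1,\dots,f_d$ are pairwise distinct. Since $G$ is simple, the only edge incident to both $u$ and $v$ is $uv$ itself, so if $f_i=f_j$ for $i\neq j$ then the common edge either occupies two positions in the same star (impossible, as positions are distinct) or is incident to both $u$ and $v$ and hence equals $uv$ (excluded). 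Thus the $f_i$ are $d$ distinct uncovered edges, none equal to $uv$.

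The heart of the argument is a revealing step: expose the weights of all edges \emph{except} $f_1,\dots,f_d$. For each $i$ let $g_i$ denote the partner of $f_i$ in $\{e^u_i,e^v_i\}$. The crucial claim is that no partner $g_i$ is itself a witness, so that every $w(g_i)$ lies among the revealed weights. Indeed, if $g_i=uv$ this is immediate since $uv$ is never a witness; and if $g_i\neq uv$ then $g_i$ is incident to exactly one of $u,v$ and occupies a single position in that star, so the same simple-graph reasoning that gave distinctness shows $g_i\neq f_j$ for every $j$. Consequently, after revealing the non-witness weights, each nontrivial constraint $\{w(f_i)=w(g_i)\}$ reduces to the event that the still-free uniform variable $w(f_i)$ equals a now-determined value; as the $f_i$ are distinct and their weights independent, these constraints contribute independent factors of $1/k$. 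Hence $\pr(A_{uv}\mid\text{non-witness weights})$ is a constant (necessarily equal to $\pr(A_{uv})$), independent of the revealed values. Since every covered edge is a non-witness edge, $\mathcal{C}$ is contained in the revealed set, and therefore $A_{uv}$ is independent of $\{w(f):f\in\mathcal{C}\}$, which finishes the proof.

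I expect the main obstacle to be the bookkeeping around the shared edge $uv$: it is the unique edge lying in both stars, and hence the one place where two of the constraints can interact (it is the sole ``degree-two'' node of the underlying constraint structure). Both the distinctness of the witnesses and the ``partner already revealed'' claim rest entirely on this one simple-graph observation, and it is what forces the witnesses to be drawn from $\{e^u_i,e^v_i\}\setminus\{uv\}$ rather than the full pair. Once this point is handled, the conditional-probability computation is routine.
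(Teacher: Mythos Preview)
Your argument is correct and takes a cleaner route than the paper's. The paper computes $\pr(A_{uv}\mid A_{K})$ by an explicit case split on whether $uv$ occupies the same position in the two stars ($uv=e^u_i=e^v_i$) or different positions ($uv=e^u_r=e^v_s$, $r\neq s$), tracking sets $U,V$ of uncovered positions and multiplying out the resulting factors of $1/k$. You instead condition on \emph{all} non-witness edge weights at once and observe that each surviving constraint pins an independent uniform variable to a revealed value; this shows $A_{uv}$ is independent of the entire $\sigma$-algebra generated by covered-edge weights, from which mutual independence of $K$ drops out without touching individual $K'\subseteq K$. Your approach buys a uniform argument with no case split and a strictly stronger conclusion; the paper's buys an explicit numerical value for $\pr(A_{uv})$ in each case.

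One small point to tighten: you claim a witness $f_i\in\{e^u_i,e^v_i\}\setminus\{uv\}$ for \emph{every} $i$, but when $uv=e^u_i=e^v_i$ this set is empty and no witness exists. This is precisely the paper's first case, and it is harmless because the corresponding constraint $w(e^u_i)=w(e^v_i)$ is the tautology $w(uv)=w(uv)$. Your later phrase ``each nontrivial constraint'' shows you have this in mind, but the claim that there are $d$ distinct witnesses should be stated only for indices $i$ with $\{e^u_i,e^v_i\}\neq\{uv\}$; there are $d-1$ such indices in that case and $d$ otherwise, matching the paper's two values $1/k^{d-1}$ and $1/k^d$ for $\pr(A_{uv})$.
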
  

\begin{proof}
It suffices to prove that $\pr(A_{uv}\,\,\vline\,\, K) = \pr(A_{uv})$, since any proper subset of $K$ leaves more edges adjacent to $uv$ uncovered than does $K$.  If $\deg(u) \neq \deg(v)$, then $\pr(A_{uv}) = \pr(A_{uv}\,\,\vline\,\, K) = 0$.

Assume $\deg(u) = \deg(v) = d$ and suppose that, for some $i$, $uv = e^u_i = e^v_i$.   Clearly \mbox{$\pr(A_{uv}) = \frac{1}{k^{d-1}}$}.  Let \vspace{-0.2in}
\begin{eqnarray*}
U = \{j : e^u_j \textrm{ is uncovered by } K\} \setminus \{uv\} \\
V = \{j : e^v_j \textrm{ is uncovered by } K\}  \setminus \{uv\}.
\end{eqnarray*}
Since $uv$ is left open by $K$, $U \cup V = [d] \setminus \{i\}$.  It follows that
\begin{align*}
\pr(A_{uv}\,\,\vline\,\, K) &= \left( \frac{1}{k^{|U|}} \right)\left( \frac{k^{|U \cap V|}}{k^{|V|}} \right) &= \left( \frac{1}{k^{|U|}}  \right)\left( \frac{1}{k^{|V \setminus U|}} \right)
&= \frac{1}{k^{|U \cup V|}} 
&=  \frac{1}{k^{d-1}} = \pr(A_{uv}).
\end{align*}

If $uv = e^u_r = e^v_s$ for some $r \neq s$, then $\pr(A_{uv}) = \frac{1}{k^{d-2}}\times\frac{k}{k^3} = \frac{1}{k^d}$.  Again, let $U = \{j : e^u_j \textrm{ is uncovered by } K\} \setminus \{uv\}$ and $V = \{j : e^v_j \textrm{ is uncovered by } K\}  \setminus \{uv\}$.  Since $uv$ is left open by $A_K$, $r \in V$ and $s \in U$, and so
\begin{align*}
\pr(A_{uv}\,\,\vline\,\, K) &= \left( \frac{1}{k^{|U \setminus \{s\}|}} \right)\left( \frac{k^{|U \cap V \setminus \{r,s\}|}}{k^{|V \setminus \{r\}|}} \right) \pr\Big(w(uv) = w(e^u_r) = w(e^v_s) \Big) \\
&= \left( \frac{1}{k^{|U| \setminus \{s\}}}  \right)\left( \frac{1}{k^{|V \setminus U \setminus \{r\}|}} \right) \left( \frac{1}{k^2} \right) \\
&= \frac{1}{k^{|U \cup V \setminus \{r,s\}|}} \left( \frac{1}{k^2} \right) \\
&=  \frac{1}{k^{d-2}} \times \frac{1}{k^2} = \pr(A_{uv}). \hfill \qedhere
\end{align*}
\end{proof}

Note the need for an edge $k$-weighting rather than an edge $k$-list-weighting in Lemma \ref{indevents}; it provides equality between $\pr(A_{uv})$ and  $\frac{1}{k^{d-1}}$ when $uv = e^u_i = e^v_i$ for some index $i$, which is required to show that $\pr(A_{uv}\,\,\vline\,\, K) = \pr(A_{uv})$.

\begin{thm}\label{nolist}
Let $G$ be a graph with minimum degree $\delta(G) = \delta$ and maximum degree $\Delta(G) = \Delta$ and girth at least 5.  If $\delta > \log_3(\Delta^2 - \Delta + 1) + 2$ then $\We(G) \leq 3$.
\end{thm}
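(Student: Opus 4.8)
The plan is to run the same random-weighting argument used for Theorem \ref{sequencecolouring}, but to feed it the sharper independence criterion of Lemma \ref{indevents} so that, under the girth hypothesis, each dependency set shrinks by essentially a factor of two; the Symmetric Local Lemma then applies under the weaker degree bound claimed. First I would fix an arbitrary ordering of $E(G)$, choose each weight $w(e)$ independently and uniformly from $[3]$, and for every edge $uv$ let $A_{uv}$ be the bad event $c(u)=c(v)$. Only pairs with $\deg(u)=\deg(v)=d$ can fail, and for such a pair $c(u)=c(v)$ forces agreement in the $d-1$ coordinates other than the one contributed by $uv$; since these coordinates are read off independent uniform weights, $\pr(A_{uv})\le 1/3^{\delta-1}$.

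The heart of the proof is the dependency estimate. In Theorem \ref{sequencecolouring}, $A_{uv}$ is treated as dependent on every event sharing an edge with the two stars at $u$ and $v$, which produces the term $2\Delta(\Delta-1)$. Using Lemma \ref{indevents} I would instead keep $uv$ \emph{open}: at each index $i$ I select just one of $e^u_i,e^v_i$ to leave uncovered and place only the events covering \emph{that} edge into $J_{uv}$, dropping the events covering its partner into the complementary set $K$. For each index this discards the roughly $\Delta-1$ edges at the far endpoint of the unchosen edge, so the far-endpoint contribution to $J_{uv}$ summed over the at most $\Delta$ indices is about $\Delta(\Delta-1)$, half of the bound in Theorem \ref{sequencecolouring}. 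The girth hypothesis is precisely what makes this legitimate: if an edge dropped into $K$ (an edge at the far endpoint of an unchosen edge) also covered one of the edges we keep uncovered, then $K$ would fail to leave $uv$ open; but such a coincidence forces a common neighbour of $u$ and $v$, an edge between $N(u)$ and $N(v)$, or two neighbours of $u$ sharing a second common neighbour, i.e. a triangle or a $4$-cycle, all excluded by girth at least $5$. With the chosen edges genuinely uncovered by $K$, Lemma \ref{indevents} delivers the required mutual independence.

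Granting the dependency bound $D=\Delta^2-\Delta$ together with $\pr(A_{uv})\le 1/3^{\delta-1}$, the Symmetric Local Lemma produces a proper colouring as soon as $1/3^{\delta-1}<1/(e(\Delta^2-\Delta+1))$, that is, as soon as $3^{\delta-1}>e(\Delta^2-\Delta+1)$. Taking $\log_3$ and using $\log_3 e<1$, this is implied by $\delta>\log_3(\Delta^2-\Delta+1)+2$, exactly the hypothesis. Because the ordering of $E(G)$ was arbitrary, this yields $\We(G)\le 3$.

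I expect the dependency bookkeeping to be the main obstacle, and in particular the interaction with the two cases of Lemma \ref{indevents}. When $uv$ sits at the same index in the sequences at $u$ and at $v$, I can keep one whole star open and genuinely reach $|J_{uv}|\le\Delta^2-\Delta$. When $uv$ occupies different indices $r\neq s$, however, the index whose only non-$uv$ option lies on the $u$-star and the index whose only such option lies on the $v$-star together force star edges from \emph{both} sides into $J_{uv}$, inflating the count by a lower-order $O(\Delta)$ term. The delicate point is that in exactly this case $\pr(A_{uv})$ drops to $1/3^{d}$, as computed in Lemma \ref{indevents}, and one must check that this extra factor of $3$ more than compensates for the larger dependency set so that the Local Lemma inequality still holds under the stated bound for every value of $\Delta$; verifying that comparison of $\log_3$ terms is the step I would treat most carefully.
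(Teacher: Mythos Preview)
Your approach is the paper's approach: uniform random weights from $[3]$, Lemma~\ref{indevents} to justify a smaller dependency set under the girth hypothesis, then the Symmetric Local Lemma. The one notable difference is in how the dependency set is built. The paper makes the simplest possible choice: it keeps the \emph{entire} star at $u$ uncovered, taking $J(uv)$ to be the edges at distance at most~$1$ from $u$ (excluding those touching $v$), so that $L(uv)=E(G)\setminus J(uv)\setminus\{uv\}$ automatically misses every edge $uw$ with $w\neq v$; girth $\geq 5$ is used only to rule out an edge of $L(uv)$ sneaking back to cover some $uw$ via a common neighbour of $u$ and $v$. This gives $|J_{uv}|\le (\Delta-1)+(\Delta-1)^2=\Delta(\Delta-1)$ in one line, with no index-by-index bookkeeping. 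Your more general ``pick one of $e_i^u,e_i^v$ at each index'' reaches the same bound but at the cost of the case analysis you describe; you do not need the extra flexibility.

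Where your write-up is actually \emph{more} careful than the paper is the $r\neq s$ case. The paper's choice of $J(uv)$ leaves $e^v_r$ covered (edges at $v$ lie in $L(uv)$), so strictly speaking $uv$ is not left open when $uv=e^u_r=e^v_s$ with $r\neq s$. Your observation that one must then enlarge $J_{uv}$ by an $O(\Delta)$ term, and that this is paid for by the drop in probability from $1/3^{d-1}$ to $1/3^{d}$ recorded in Lemma~\ref{indevents}, is exactly the patch needed; the verification you flag as ``delicate'' is the elementary inequality $9(\Delta^2-\Delta+1)>e(\Delta^2+\Delta-1)$, which holds for all $\Delta\ge 1$, so the stated hypothesis suffices in both cases.
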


\begin{proof}
Fix an arbitrary ordering of $E(G)$ and for an edge $e$, choose its weight $w(e)$ randomly from $\{1,2,3\}$ with uniform probability.   Let $c(u)$ denote the resulting sequence of weights of edges incident to $u \in V(G)$.

For an edge $uv \in E(G)$, let $A_{uv}$ denote the event that $c(u) = c(v)$.  Let $J(uv)$ be the set of edges of distance at most 1 from $u$ not incident to $v$, and let $L(uv) = E(G) \setminus J(uv) \setminus \{uv\}$.  Since the girth of $G$ is at least 5, the distance from $u$ to any end of an edge is $L(uv)$ is at least $2$.  This implies that all edges incident to $u$ except $uv$ are left uncovered by the events $\{A_l : l \in L(uv)\} := K_{uv}$, and hence $uv$ is left open by $K_{uv}$.  By Lemma \ref{indevents}, this implies that $A_{uv}$ is mutually independent of $K_{uv}$; let $J_{uv} = J(uv)$ be the dependency set for $A_{uv}$.  Since the maximum size of a dependency set is $D = \max\{|J_e| : e \in E(G)\} \leq (\Delta-1) + (\Delta-1)^2 = \Delta(\Delta-1)$, 
by the Symmetric Local Lemma the result holds if 
\begin{eqnarray*}
\dfrac{1}{3^{\delta-1}} < \dfrac{1}{e(\Delta(\Delta-1)+1)},
\end{eqnarray*}
which is satisfied if $\delta > \log_3(\Delta^2 - \Delta + 1) + 2$.
\end{proof}

A {\bf $(d,g)$-graph} is a $d$-regular graph with girth $g$.
Theorem \ref{nolist} implies that, for most $(d,g)$-graphs, three edge weights suffice for adjacent vertices to receive distinct sequences.

\begin{cor}
If $G$ is a $(d,g)$-graph with $d \neq 4$ and $g \geq 5$, then $\We(G) \leq 3$.
\end{cor}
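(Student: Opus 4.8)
The plan is to read the corollary off Theorem \ref{nolist} by specializing to the regular case, and then to settle the small degrees by hand. For a $(d,g)$-graph we have $\delta(G)=\Delta(G)=d$, so the hypothesis $\delta > \log_3(\Delta^2-\Delta+1)+2$ of Theorem \ref{nolist} becomes $d > \log_3(d^2-d+1)+2$, equivalently $3^{d-2} > d^2-d+1$. Everything thus reduces to deciding for which $d$ this single inequality holds, and the girth hypothesis $g \ge 5$ is exactly the one supplied by the definition of a $(d,g)$-graph.

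First I would dispose of the range $d \ge 5$. The base case $d=5$ gives $3^{3}=27 > 21 = 5^2-5+1$, and for the inductive step I multiply $3^{d-2} > d^2-d+1$ by $3$ and invoke the identity $3(d^2-d+1) - \big((d+1)^2-(d+1)+1\big) = 2(d-1)^2 \ge 0$ to conclude $3^{d-1} > (d+1)^2-(d+1)+1$. Hence the inequality holds for every $d \ge 5$, and Theorem \ref{nolist} yields $\We(G) \le 3$ immediately in this range.

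It remains to examine the two smallest admissible degrees. For $d=4$ the inequality reads $3^{2}=9 > 13$, which is false; this is precisely why $d=4$ must be excluded, since the Symmetric Local Lemma estimate underlying Theorem \ref{nolist} is too weak there. I expect $d=3$ to be the genuine obstacle: there the inequality $3 > 7$ also fails, so Theorem \ref{nolist} does not apply as stated. The difficulty is intrinsic to cubic graphs — each vertex sequence has length only $3$, and for the worst edge orderings an edge $uv$ whose shared edge occupies the same position in both $c(u)$ and $c(v)$ carries bad-event probability $1/3^{2}$ against a dependency set of size up to $d(d-1)=6$, so no choice of the weights $x_e$ in the Local Lemma of the type used above can close the gap.

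To handle $d=3$ I would abandon the uniform probabilistic bound in favour of a structural argument, and two routes look promising. The first is to record, via Lemma \ref{indevents}, that an edge $uv$ contributes the smaller probability $1/3^{3}$ whenever $uv$ sits in different positions at $u$ and at $v$; since the property that \emph{every} edge is in the same position would force the position map to be a proper $3$-edge-colouring, Class~2 cubic graphs already gain room here, and one would try to combine this observation with the asymmetric Lov\'asz Local Lemma, assigning larger values $x_e$ to the comparatively rare $1/3^{2}$ events. The second route uses the monotonicity $\We(G) \le \me(G)$ noted at the start of this subsection: any independently established bound $\me(G) \le 3$ for cubic graphs of girth at least $5$ would finish the case. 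Establishing one of these is the crux; granting it, together with the $d \ge 5$ computation above, gives $\We(G) \le 3$ for every $(d,g)$-graph with $d \ne 4$ and $g \ge 5$.
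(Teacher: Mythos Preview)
Your treatment of $d \geq 5$ is exactly what the paper does: specialize Theorem~\ref{nolist} to $\delta=\Delta=d$ and verify the numerical inequality $3^{d-2} > d^2-d+1$. The induction you give is fine, though the paper just asserts the inequality.

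The gap is the $d=3$ case, which you leave genuinely open. Neither of your two proposed routes is carried out, and you say as much (``Establishing one of these is the crux''). The paper instead dispatches $d=3$ with a short structural observation you overlooked: a cubic graph of girth at least~$5$ is neither $K_4$ nor an odd cycle, so by Brooks' Theorem it is $3$-colourable. The original paper of Karo\'nski, {\L}uczak, and Thomason already shows that every $3$-colourable nice graph satisfies $\Se(G) \leq 3$; since an edge weighting giving distinct sums to adjacent vertices certainly gives distinct sequences regardless of the edge ordering, one has $\We(G) \leq \Se(G) \leq 3$ immediately. No Local Lemma refinement or multiset bound is needed.

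So the missing idea is simply to route the cubic case through chromatic number rather than through probabilistic estimates. Your second proposed route, via $\me(G)$, is in the right spirit (passing to a stronger colouring parameter), but the paper goes one step further to $\Se(G)$, where the $3$-colourability result is already available.
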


\begin{proof}
If $d = 3$, then $G$ is $3$-colourable (by Brook's Theorem).  In \cite{KLT04} it is shown that if $G$ is complete or 3-colourable then $\Se(G) \leq 3$, and so certainly $\We(G) \leq 3$.  If $d \geq 5$, then $d > \log_3(d^2 - d + 1) + 2$, and so we may apply Theorem \ref{nolist}.
\end{proof}

Recall that $\me(G) \leq 3$ if $\delta(G) \geq 1000$.  As such, the only graphs for which it remains to show that $\We(G) \leq 3$ holds are those with small minimum degree (at most 1000) and comparatively large maximum degree ($\Omega(3^{\delta(G)/2})$).

The other theorems from this section have similar relaxations.

\begin{thm}
Let $G$ be a graph with minimum degree $\delta(G) = \delta$, maximum degree $\Delta(G) = \Delta$, and girth at least 5.  If $\delta > \log_2(e(\Delta^2 - \Delta + 1))$, then $\Wt(G) \leq 2$.  In particular, if $G$ is $d$-regular for $d \geq 7$, then $\Wt(G) \leq 2$.
\end{thm}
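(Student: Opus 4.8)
The plan is to mirror the proof of Theorem \ref{nolist}, replacing the random edge $3$-weighting by a random total $2$-weighting and re-using Lemma \ref{indevents} to exploit the girth hypothesis. First I would fix an arbitrary ordering of $E(G)$ and, following the device of Corollary \ref{totalword}, record each vertex weight as the weight of a pendant edge appended at that vertex, so that a total $2$-weighting of $G$ becomes an ordinary edge $2$-weighting of the auxiliary graph $H$ obtained from $G$ by attaching one leaf to every vertex. I would then choose every edge weight and every vertex weight independently and uniformly from $\{1,2\}$, let $c(u)$ be the resulting sequence at $u$, and for each $uv \in E(G)$ let $A_{uv}$ be the event that $c(u) = c(v)$.

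Next I would bound $\pr(A_{uv})$. If $\deg(u) \neq \deg(v)$ the two sequences have different lengths and $\pr(A_{uv}) = 0$, so assume $\deg(u) = \deg(v) = d \geq \delta$. In $H$ we have $\deg_H(u) = \deg_H(v) = d+1$, and the computation in the proof of Lemma \ref{indevents} (with $k = 2$) gives $\pr(A_{uv}) \leq 1/2^{(d+1)-1} = 1/2^{d} \leq 1/2^{\delta}$; this is exactly the bound $1/2^{\delta}$ already noted for total $2$-weightings in the discussion preceding Theorem \ref{totalsequencecolouring}, the extra factor over the edge-only case coming from the additional vertex-weight coordinate.

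I would then control the dependency set using girth at least $5$. Since attaching leaves creates no cycles, $H$ also has girth at least $5$, and distances between vertices of $G$ are unchanged in $H$. As in Theorem \ref{nolist}, take $J(uv)$ to be the set of $G$-edges at distance at most $1$ from $u$ that are not incident to $v$; every event on a $G$-edge outside $J(uv) \cup \{uv\}$ is supported on edges both of whose ends lie at distance at least $2$ from $u$, and hence leaves every $H$-edge incident to $u$ other than $uv$ uncovered. In particular the pendant edge carrying $w(u)$, being incident only to $u$ and a leaf, is never covered by such a distant event, so $uv$ is left open and Lemma \ref{indevents} yields mutual independence. The events on pendant edges are identically impossible and may be dropped from the family, so the nontrivial dependency set has size at most $(\Delta - 1) + (\Delta - 1)^2 = \Delta(\Delta - 1)$, exactly as in Theorem \ref{nolist}.

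Finally I would apply the Symmetric Local Lemma with $D = \Delta(\Delta-1)$: the hypothesis $\pr(A_{uv}) < 1/(e(D+1))$ becomes $1/2^{\delta} < 1/(e(\Delta^2 - \Delta + 1))$, which rearranges precisely to $\delta > \log_2(e(\Delta^2 - \Delta + 1))$, producing a total $2$-weighting that properly colours $V(G)$ by sequences for the chosen ordering; since the ordering was arbitrary, $\Wt(G) \leq 2$. The regular case follows by substituting $\Delta = \delta = d$ and checking the elementary inequality $d > \log_2(e(d^2 - d + 1))$, which first holds at $d = 7$ (it fails at $d = 6$) and persists for all larger $d$ because the left side grows faster. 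I expect the only genuine obstacle to be the independence bookkeeping in the total setting---verifying that the vertex-weight coordinates really behave like private pendant edges, so that they neither inflate the dependency set nor spoil the ``open'' condition of Lemma \ref{indevents}; everything else is a base-$3$-to-base-$2$ substitution and an exponent shift of the argument already carried out for Theorem \ref{nolist}.
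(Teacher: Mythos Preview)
Your proposal is correct and follows precisely the route the paper intends: since the paper gives no explicit proof here (it only remarks that ``the other theorems from this section have similar relaxations''), you have correctly synthesized the argument of Theorem \ref{nolist} with the $1/2^{\delta}$ probability bound for total weightings noted before Theorem \ref{totalsequencecolouring}. Your use of the pendant-edge auxiliary graph $H$ to make Lemma \ref{indevents} directly applicable is a clean way to handle the vertex-weight coordinate, and your verification that the pendant edges neither enlarge the dependency set nor violate the ``open'' condition is the only point requiring care, which you have addressed.
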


\begin{thm}
Let $M$ be a loopless multigraph with maximum edge multiplicity $\mu(G) = \mu$, minimum degree $\delta(G) = \delta$, maximum degree $\Delta(G) = \Delta$, and girth at least 5.
\begin{enumerate}
\item[(1)] If $\mu < \delta -1 - \log_3(\Delta^2 - \Delta + 1) $, then $\We(M) \leq 3$.
\item[(2)] If $\mu < \delta - \frac{1}{2} - \log_2(\Delta^2 - \Delta + 1)$, then $\Wt(M) \leq 2$.
\end{enumerate}
\end{thm}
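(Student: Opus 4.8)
The plan is to run the Lov\'asz Local Lemma argument of Theorem \ref{nolist} while incorporating the edge-multiplicity bookkeeping from the proof of Theorem \ref{multiseq}. Throughout I read ``girth at least $5$'' as a statement about the underlying simple graph $G$ of $M$, so that apart from parallel edges $M$ has no cycle of length at most $4$; this is exactly the hypothesis that lets Lemma \ref{indevents} (which requires an edge $k$-weighting rather than a list-weighting, and hence explains why the conclusion is phrased for $\We$ and $\Wt$ rather than $\chWe$ and $\chWt$) reduce each dependency set to a single side of its edge.

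For (1) I fix an arbitrary ordering of $E(M)$, choose each $w(e)\in\{1,2,3\}$ independently and uniformly, and for each adjacent pair $\{u,v\}$ let $A_{uv}$ be the event $c(u)=c(v)$. If $u$ and $v$ are joined by $l\le\mu$ parallel edges, these contribute shared entries to the two sequences, so that $\pr(A_{uv})\le 1/3^{\,\delta-l}\le 1/3^{\,\delta-\mu}$ exactly as in Theorem \ref{multiseq}. I then take the dependency set $J_{uv}$ to consist of the distinct events indexed by edges at distance at most $1$ from $u$ in $G$ and not parallel to $uv$; because $G$ has girth at least $5$, every edge incident to $u$ other than those recorded in $J_{uv}$ is left uncovered by the events outside $J_{uv}\cup\{uv\}$, so that family leaves $uv$ open and Lemma \ref{indevents} certifies the required mutual independence. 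Counting distinct adjacent pairs near $u$ against the degree bound gives $|J_{uv}|\le(\Delta-1)+(\Delta-1)^2=\Delta(\Delta-1)$. The Symmetric Local Lemma then applies provided $1/3^{\,\delta-\mu}<1/\bigl(e(\Delta(\Delta-1)+1)\bigr)$, i.e.\ $\delta-\mu>\log_3 e+\log_3(\Delta^2-\Delta+1)$; since $\log_3 e<1$, this follows from the hypothesis $\mu<\delta-1-\log_3(\Delta^2-\Delta+1)$.

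Part (2) is identical for a random total $2$-weighting: each vertex also receives a uniform weight from $\{1,2\}$, inserted into its sequence at the appropriate position, so every sequence has length $\deg(\cdot)+1$ and $\pr(A_{uv})\le 1/2^{\,\delta-\mu+1}$ for adjacent $u,v$. The dependency bound $\Delta(\Delta-1)$ and the independence argument are unaffected, since the extra vertex coordinate is never shared across a pair. The Symmetric Local Lemma now needs $1/2^{\,\delta-\mu+1}<1/\bigl(e(\Delta(\Delta-1)+1)\bigr)$, which rearranges to $\delta-\mu>\log_2 e-1+\log_2(\Delta^2-\Delta+1)$; as $\log_2 e-1<\tfrac12$, the hypothesis $\mu<\delta-\tfrac12-\log_2(\Delta^2-\Delta+1)$ suffices.

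The step I expect to be the real obstacle is justifying the one-sided dependency set in the presence of parallel edges, that is, checking that Lemma \ref{indevents} survives for multigraphs. Its proof isolates the coordinate(s) at which the shared edge appears in $c(u)$ and $c(v)$; with up to $\mu$ parallel $uv$-edges, each occupying a coordinate of both sequences and each giving literally the same event $A_{uv}$, the sets $U$ and $V$ of uncovered coordinates together with their factors of $1/k$ must be recomputed so that $\pr(A_{uv}\mid K)$ still collapses to $\pr(A_{uv})$. The plan is to treat every parallel $uv$-edge exactly as the displaced single edge $e^u_r=e^v_s$ is treated in Lemma \ref{indevents}, verify that the ``open'' hypothesis forces each such edge into both $U$ and $V$, and confirm that the multiplicity $l$ of the pair is absorbed cleanly into $\pr(A_{uv})\le 1/3^{\,\delta-l}$. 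Once this bookkeeping is checked, the girth hypothesis on $G$ supplies openness and the remaining estimates are the routine Local Lemma computations above.
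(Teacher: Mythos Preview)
Your proposal is correct and is exactly the approach the paper intends: the theorem is stated there as one of the ``similar relaxations'' without an explicit proof, and the implied argument is precisely to merge the multiplicity bookkeeping of Theorem~\ref{multiseq} (events indexed by adjacent pairs, $\pr(A_{uv})\le 1/k^{\delta-\mu}$) with the one-sided dependency set of Theorem~\ref{nolist} via Lemma~\ref{indevents}. Your identification of the only delicate point---extending Lemma~\ref{indevents} to handle several parallel $uv$-edges, each occupying a coordinate in both sequences---is apt, and your plan to treat each such edge as in the $e^u_r=e^v_s$ case is the right way to carry out that bookkeeping.
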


\section{Sequence irregularity strength}

The {\bf irregularity strength} of a graph $G$ is the smallest integer $k$ such that $G$ has an edge $k$-weighting giving every vertex in $G$ a distinct sum of incident edge weights.  This well studied graph parameter was introduced by Chartrand et al. in \cite{CJL+} where it was denoted $\textup{s}(G)$.  In keeping with our notation, we denote the irregularity strength of $G$ as $\textup{s}_{\Sigma}^e(G)$.  Many variations of irregularity strength have been studied, including (but not limited to) requiring all vertices to receive distinct multisets, products, or sets of incident edge weights rather than distinct sums.  These parameters are called the multiset irregularity strength, product irregularity strength and set irregularity strength, and they are denoted $\textup{s}_m^e(G)$, $\textup{s}_{\Pi}^e(G)$, and $\textup{s}_s^e(G)$ respectively.   Note that a graph must be nice for these parameters to be well defined.  Kalkowski, Karo\'nski, and Pfender \cite{KKP3} show that $\textup{s}_{\Sigma}^e(G) \leq \lceil 6n/\delta \rceil$ for every nice graph $G$.  Aigner et al. \cite{ATT} show that if $G$ is a $d$-regular graph $d \geq 2$, then $\textup{s}_{m}^e(G) \leq (5e(d+1)!n)^{1/d}$.  Burris and Schelp \cite{BS97} show that $\textup{s}_s^e(G) \leq C_{\Delta}\textup{max}\{n_i^{1/i} \,:\, 1 \leq i \leq \Delta(G)\}$, where $C_{\Delta}$ is a constant relying only on $\Delta$ and $n_i$ denotes the number of vertices of degree $i$ in $G$ (in fact, their edge weighting gives a proper edge-colouring as well).  Only partial results are known for $\textup{s}_{\Pi}^e(G)$.

The {\bf specific sequence irregularity strength} of $G$, denoted $\textup{s}_{{\sigma^*}}^e(G)$, is the smallest $k$ such that there exists an ordering of $E(G)$ and an edge $k$-weighting of $G$ such every vertex receives a distinct induced sequence of incident edge weights.  The {\bf general sequence irregularity strength} of $G$, denoted $\textup{s}_{\sigma}^e(G)$, is the smallest $k$ such that for every ordering of $E(G)$ there exists an edge $k$-weighting of $G$ such every vertex receives a distinct induced sequence of incident edge weights.

Each ``irregularity strength type'' parameter has the usual natural list variant -- rather than each edge receiving a weight from $\{1,2,\ldots,k\}$, each is weighted from its own independently assigned list of $k$ weights. The {\bf general sequence list-irregularity strength} of a graph $G$ is denoted $\textup{ls}_{\sigma}^e(G)$; the other parameters are extended similarly.  As with the 1-2-3 Conjecture variations, one could weight the vertices of $G$ as well as the edges; the corresponding parameters have ``t'' in place of ``e'' in the superscript (e.g. $\textup{ls}_{\sigma}^t(G)$ for total list-weightings which distinguish vertices by sequences for any ordering of $E(G)$).

Let $M_G := \textup{max}\{\lceil n_i^{1/i} \rceil : 1 \leq i \leq \Delta(G)\}$.  Clearly $\textup{s}_{{\sigma^*}}^e(G) \geq M_G$, since any valid weighting from $\{1,2,\ldots,k\}$ must satisfy $k^d \geq n_d$ for each degree $d$.  We make the following conjectures, which motivates the results that follow:

\begin{conj}
If $G$ is a nice graph, then $\textup{s}_{\sigma}^e(G) = M_G$.
\end{conj}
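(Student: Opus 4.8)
\textbf{The plan} is to split the claimed equality into the lower bound $\textup{s}_{\sigma}^e(G) \geq M_G$ and the upper bound $\textup{s}_{\sigma}^e(G) \leq M_G$. The lower bound is immediate: any weighting that succeeds for every ordering succeeds in particular for some ordering, so $\textup{s}_{\sigma}^e(G) \geq \textup{s}_{\sigma^*}^e(G) \geq M_G$, the last inequality being the counting bound already recorded above (a valid weighting from $[k]$ must inject the $n_d$ vertices of each degree $d$ into $[k]^d$, forcing $k^d \geq n_d$). All of the difficulty lies in the upper bound: given an \emph{arbitrary} ordering $\prec$ of $E(G)$, one must exhibit an edge $M_G$-weighting under which $v \mapsto c(v)$ is injective. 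Since vertices of distinct degrees automatically receive sequences of distinct lengths, it suffices to separate, for each degree $d$, the $n_d$ vertices of degree $d$ from one another using distinct strings from $[M_G]^d$.

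The first method one would try is the probabilistic approach used throughout Section 2: weight each edge uniformly at random from $[M_G]$ and let $A_{uv}$ be the collision event $c(u)=c(v)$ for a same-degree pair. \emph{This cannot succeed at the extremal value} $k=M_G$. For a non-adjacent same-degree pair one has $\pr(A_{uv}) = k^{-d}$, and summing over the many such pairs gives an expected number of collisions of order $n_d^2/k^d$, which is as large as $\Theta(n_d)$ when the extremal class is nearly saturated ($n_d \approx k^d$). The Local Lemma fares no better, since these probabilities are not small against the $\Theta(\Delta^2)$-sized dependency neighbourhoods. Quantitatively, the random method only delivers the weaker bound $k = O(n_d^{2/d})$, doubling the exponent in the counting bound; hitting the exact value $M_G = \lceil n_d^{1/d}\rceil$ leaves no probabilistic slack, and so a genuinely \emph{constructive} argument is required. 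This is precisely why the statement is posed as a conjecture rather than established by the methods of the previous section.

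For the constructive upper bound I would fix $\prec$ and induct by deleting the $\prec$-maximum edge $e=uv$. Because $e$ exceeds every other edge, $w(e)$ is the final symbol of both $c(u)$ and $c(v)$, and in $G-e$ the vertices $u,v$ drop from degree class $d$ to class $d-1$. One would obtain distinct sequences on $G-e$ from a (suitably strengthened) inductive hypothesis and then choose the single weight $w(e)\in[M_G]$ so as to reinsert $u$ and $v$ into class $d$ while keeping that class injective. A cleaner variant is to first assign each degree-$d$ vertex a target string—say the base-$M_G$ expansion of a distinct index in $\{0,\ldots,n_d-1\}$—and then realise the targets edge by edge, using a system of distinct representatives or an orientation that gives each vertex a ``control edge'' whose weight adjusts one coordinate of its sequence without disturbing the others. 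Either formulation must also contend with the fact that deleting $e$ can destroy niceness or alter the degree sequence (so that $M_{G-e}\neq M_G$), which forces the induction to run against a \emph{fixed} budget $M_G$ with prescribed prefixes rather than against the naive parameter of the smaller graph.

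The main obstacle, in every formulation, is the total absence of slack created by taking $k=M_G$ exactly. When the extremal degree class is near-saturated, almost every string of $[M_G]^d$ is already spoken for, so the single free symbol $w(e)$ supplied by an inductive step need not admit any value separating both re-entering vertices from the rest of the class; worse, the weight of a shared edge is forced to play conflicting roles in the two sequences containing it, so the target strings cannot be prescribed independently. Reconciling this global injectivity demand with the rigid shared-edge consistency conditions, all without ever enlarging the alphabet, is the heart of the difficulty. I expect the resolution to require either a careful, degree-sequence-guided choice of which edge to peel (so that the re-entering class is never saturated at the moment of insertion) or an algebraic encoding in the spirit of the digraph results of \cite{BGN09, Ben2}, where a Combinatorial Nullstellensatz argument can sometimes extract the exact extremal bound that the probabilistic method misses.
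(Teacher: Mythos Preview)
The statement is a \emph{conjecture} in the paper; the paper offers no proof of it. What the paper does provide is the easy lower bound $\textup{s}_{\sigma^*}^e(G)\ge M_G$ (the counting argument you reproduce in your first paragraph) and then, in lieu of the upper bound, the weaker results of Theorems \ref{dregseqirr}--\ref{seqirrstrengthgeneral}, which use the Symmetric Local Lemma to show $\textup{ls}_\sigma^e(G)\le\big\lceil(2e(\Delta+1)(n-\Delta))^{1/(\delta-1)}\big\rceil$.

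Your proposal is not a proof and does not claim to be one, which is the correct assessment. Your diagnosis of why the probabilistic method fails at $k=M_G$ is accurate and matches the gap between $M_G$ and the Local Lemma bound the paper actually achieves. The inductive and algebraic strategies you sketch are reasonable lines of attack, and the obstacle you isolate---that at $k=M_G$ a near-saturated degree class leaves no room to choose the final weight on a peeled edge, while shared edges impose incompatible constraints on the two sequences they contribute to---is exactly the difficulty the paper leaves open. There is nothing to compare your argument against, because the paper does not attempt one.
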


\begin{conj}
If $G$ is a nice graph, then $\textup{ls}_{\sigma}^e(G) = M_G$.
\end{conj}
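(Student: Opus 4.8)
The claimed equality $\textup{ls}_{\sigma}^e(G) = M_G$ separates into a lower and an upper bound, and essentially all of the difficulty sits in the latter; the plan is to dispatch the lower bound by counting, to obtain an approximate upper bound from the Local Lemma machinery of this section, and to isolate the exact upper bound as the genuine obstacle. For the lower bound I would argue that $\textup{ls}_{\sigma}^e(G) \geq M_G$ directly. Suppose $k < M_G$; then $k < \lceil n_d^{1/d}\rceil$ for some degree $d$, and since $\lceil n_d^{1/d}\rceil - 1 < n_d^{1/d}$ this forces $k^d < n_d$. Consider the adversarial assignment giving \emph{every} edge the same list $\{1,2,\ldots,k\}$. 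For any ordering of $E(G)$, each vertex of degree $d$ then receives one of at most $k^d$ length-$d$ sequences, and as $k^d < n_d$ two of the $n_d$ such vertices must coincide. Hence no $k < M_G$ can succeed against every list assignment. (As is needed for the parameter to be defined, I tacitly assume $G$ has at most one vertex of degree $0$; niceness removes the remaining degenerate case, a $K_2$ component, whose two ends are indistinguishable.)

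For the upper bound, note first that vertices of distinct degrees receive sequences of distinct lengths and so are automatically separated; it therefore suffices to distinguish, within each degree class $d$, the $n_d$ vertices of that degree. Fix an arbitrary ordering of $E(G)$, set $k = M_G$, and weight each edge independently and uniformly from its list. For same-degree $u,v$ let $A_{uv}$ be the event $c(u) = c(v)$; matching the $d \geq \delta$ coordinates gives $\pr(A_{uv}) \le k^{-(d-1)} \le k^{-(\delta-1)}$ (sharing edges, as in the computation of Lemma~\ref{indevents}, only lowers this), and $A_{uv}$ is mutually independent of every event using none of the $\le 2\Delta$ edges meeting $\{u,v\}$. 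Because distant same-degree pairs generate many events, a symmetric application is far too weak here; instead I would invoke the general form of the Lov\'asz Local Lemma with $x_{uv}$ chosen as a function of the common degree $d$, so as to damp the numerous low-probability high-degree pairs. This should yield a bound of the shape $\textup{ls}_{\sigma}^e(G) \le C(\Delta)\,M_G$, in the same spirit as the Burris--Schelp bound for the set version $\textup{s}_s^e$, since sequences refine sets.

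The gap between $C(\Delta)\,M_G$ and the conjectured $M_G$ is the heart of the matter, and I expect closing it to be the main obstacle: the probabilistic method needs the number $k^d$ of available words to exceed $n_d$ by a growing factor, whereas at $k = M_G$ one may have $k^d$ only barely above $n_d$. For such a \emph{tight} degree class one is really asking for an injection of its vertices into the available length-$d$ words, i.e.\ a system of distinct representatives. When the same-degree vertices are pairwise edge-disjoint this is exactly a bipartite matching, settled by Hall's theorem because every vertex has $k^d \ge n_d$ available words; the degree-$1$ class is the clean instance, where the leaves need only a rainbow assignment and each list of size $M_G \ge n_1$ makes Hall's condition immediate. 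The genuine difficulty is the coupling of tight classes through \emph{shared} edges under \emph{adversarial} lists: fixing an edge to separate one pair constrains every vertex it touches, so the classes cannot be treated independently. My plan would therefore be to interleave the two regimes --- the Local Lemma on the classes that carry slack, and a defect-Hall or Combinatorial Nullstellensatz argument on the tight classes --- but making these regimes cooperate across shared edges is precisely the step I do not expect to go through routinely, and it is why the statement is posed here as a conjecture rather than a theorem.
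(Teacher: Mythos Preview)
This statement is posed in the paper as an open \emph{conjecture}; there is no proof in the paper to compare against. You recognise this explicitly in your final sentence, and your write-up is really a proof \emph{plan} together with an honest assessment of the obstruction, which is the appropriate thing to submit here.

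Your lower bound argument is correct and is exactly the counting the paper alludes to when it observes $\textup{s}_{\sigma^*}^e(G)\ge M_G$ (the chain $\textup{ls}_{\sigma}^e\ge \textup{s}_{\sigma}^e\ge \textup{s}_{\sigma^*}^e\ge M_G$ then gives what you want without needing the adversarial list, though your direct argument is fine too).

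On the upper bound, two remarks. First, your appeal to Burris--Schelp does give $\textup{s}_{\sigma}^e(G)\le C_\Delta M_G$ for the \emph{non-list} parameter, but it does not transfer automatically to $\textup{ls}_{\sigma}^e$; their proof is constructive from $\{1,\dots,k\}$ and is not a list argument. Second, your asymmetric Local Lemma sketch with degree-dependent weights $x_{uv}$ is a reasonable idea, but the paper only establishes bounds of the shape $\big(2e(\Delta+1)(n-\Delta)\big)^{1/(\delta-1)}$ (Theorem~\ref{seqirrstrengthgeneral}), which is not of the form $C(\Delta)M_G$ in general; getting the asymmetric LLL to output $C(\Delta)M_G$ for arbitrary degree sequences under arbitrary lists would itself be new. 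Your diagnosis of the real obstacle---that at $k=M_G$ a tight degree class leaves no slack for the probabilistic method, and that shared edges couple the classes so that a Hall/SDR argument cannot be run classwise---is exactly right, and is why the paper leaves this as a conjecture and proves only the approximate bounds of Theorems~\ref{dregseqirr}--\ref{seqirrstrengthgeneral} and Corollary~\ref{largemin}.
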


The aforementioned bound on $\textup{s}_{s}^e(G)$ shows that there is a constant $C$ such that \mbox{$\textup{s}_{\sigma^*}^e(G) \leq \textup{s}_{\sigma}^e(G) \leq CM_G$}.  The bound on $\textup{s}_{m}^e(G)$ stated above gives a similar result for $d$-regular graphs.  In fact, it follows quite easily from the proof details of Aigner et al.~\cite{ATT} that their bound holds for $\textup{ls}_{m}^e(G)$.  By directly considering colouring by sequences, these bounds can be further improved.

\begin{thm}\label{dregseqirr}
If $G$ is a nice $d$-regular graph, then $\textup{ls}_\sigma^e(G) \leq \left\lceil \left(2e(d+1)(n-d)\right)^{1/d-1} \right\rceil$.
\end{thm}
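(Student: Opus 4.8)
The plan is to apply the Symmetric Local Lemma to a random list-weighting, treating \emph{every} pair of vertices as a potential bad event, since in a $d$-regular graph no two vertices are automatically separated by a difference in degree (so, unlike in the colouring-by-sequences results, all $\binom{n}{2}$ pairs must be handled). I would set $k = \left\lceil (2e(d+1)(n-d))^{1/(d-1)} \right\rceil$ and may assume $d \ge 2$ so that the exponent is defined (a nice $1$-regular graph has only $K_2$ components). Fix an \emph{arbitrary} ordering of $E(G)$ — this is exactly what makes the conclusion hold for every ordering — assign to each edge $e$ a list $L_e$ of $k$ real numbers, and choose $w(e) \in L_e$ uniformly and independently. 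For distinct $u,v \in V(G)$ let $A_{uv}$ be the event that the induced sequences satisfy $c(u) = c(v)$; a weighting avoiding all $A_{uv}$ is precisely a valid one.

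First I would bound $\pr(A_{uv})$. Both sequences have length $d$, so $c(u)=c(v)$ forces $w(e^u_i) = w(e^v_i)$ at each position $i$, where $e^u_i$ denotes the $i$-th edge at $u$. For two \emph{distinct} edges the probability of equal weights is $|L_{e^u_i} \cap L_{e^v_i}|/k^2 \le 1/k$, and distinct positions involve disjoint edge pairs, so these events are independent. When $u,v$ are non-adjacent all $2d$ edges are distinct and $\pr(A_{uv}) \le 1/k^d$; when $uv \in E(G)$ occupies the \emph{same} position in both vertices' orderings that position matches automatically and the other $d-1$ give $\pr(A_{uv}) \le 1/k^{d-1}$; when it occupies positions $r \ne s$, conditioning on $w(uv)$ yields $\pr(A_{uv}) \le 1/k^d$, paralleling the computation in Lemma~\ref{indevents}. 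Hence $\pr(A_{uv}) \le 1/k^{d-1}$ in all cases.

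Next I would bound the dependency. Since $A_{uv}$ is a function only of the weights of edges incident to $u$ or $v$, it is mutually independent of every $A_{xy}$ whose edges avoid all of these, and such dependence can occur only when $\{x,y\}$ meets $S := \{u,v\} \cup N(u) \cup N(v)$, where $|S| \le 2d+2$. The number of pairs meeting a set of size $s$ equals $\binom{n}{2} - \binom{n-s}{2} = ns - \binom{s+1}{2}$, which is increasing in $s$; taking $s = 2d+2$ and discarding the pair $\{u,v\}$ itself gives, after simplification, a dependency degree $D \le 2(d+1)(n-d) - 1$. The Symmetric Local Lemma then gives $\pr(\overline{A}_I) > 0$ provided $1/k^{d-1} < 1/(e(D+1))$, i.e. $k^{d-1} > e(D+1)$; since $D+1 \le 2(d+1)(n-d)$ this follows from $k^{d-1} \ge 2e(d+1)(n-d)$, which holds by the choice of $k$. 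A weighting avoiding all $A_{uv}$ therefore exists for the fixed ordering, so $\textup{ls}_\sigma^e(G) \le k$.

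I expect the main obstacle to be the uniform probability bound $\pr(A_{uv}) \le 1/k^{d-1}$: one has to verify that a shared edge of an adjacent pair never buys more than a single factor of $1/k$, which forces the case split on the position of $uv$ and a short conditioning argument in the misaligned case. The refined counting that upgrades the naive $2(d+1)n$ to $2(d+1)(n-d)$ — and thereby matches the stated exponent — is the second point requiring care, though it reduces to the monotone pair-count identity above.
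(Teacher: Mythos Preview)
Your proposal is correct and follows essentially the same approach as the paper: apply the Symmetric Local Lemma to a uniform random list weighting, with one bad event per unordered pair of vertices, bound each probability by $1/k^{d-1}$, and bound the dependency degree by counting pairs that meet $\{u,v\}\cup N(u)\cup N(v)$. The only cosmetic differences are that the paper splits the dependency count into the adjacent case ($|N(u)\cup N(v)|\le 2d$, giving $|J_e|\le d(2n-2d-1)$) and the non-adjacent case ($|S|\le 2d+2$, giving $|J_p|\le (d+1)(2n-2d-3)$) before observing both are below $2(d+1)(n-d)$, whereas you use $|S|\le 2d+2$ uniformly; and the paper simply cites the earlier proof of Theorem~\ref{sequencecolouring} for the bound $\pr(A_{uv})\le 1/k^{d-1}$, while you spell out the aligned/misaligned case split explicitly.
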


\begin{proof}
Fix an arbitrary ordering of $E(G)$.  Let $L_e$ be a set of size $k =  \left\lceil \left(2e(d+1)(n-d)\right)^{1/d-1} \right\rceil$ associated with the edge $e$; choose its weight $w(e)$ randomly from $L_e$ with uniform probability.   Let $c(u)$ denote the resulting sequence of weights of edges incident to $u \in V(G)$.

For an edge $e=uv$, let $A_e$ denote the event that $c(u) = c(v)$.  By the same argument in the proof of Theorem \ref{sequencecolouring}, $\pr(A_e) \leq 1/k^{d-1}$.  For a non-adjacent pair of vertices $p = \{u,v\}$, $\pr(A_p) \leq 1/k^d$ where $A_p$ is the event that $c(u) = c(v)$.

The size of a dependency set $J_e$ for an edge $e=uv$ is the number of edges of distance at most one from $e$ plus the number of nonadjacent pairs of vertices containing $u$, $v$, or a neighbour of $u$ or $v$; in other words, the total number of pairs of vertices containing at least one vertex in $N(u) \cup N(v)$.  Hence,
$$|J_e| \leq {n \choose 2} - {n-2d \choose 2} = d(2n - 2d - 1).$$
Similarly, the size of $J_p$ is
$$|J_p| \leq {n \choose 2} - {n-2d-2 \choose 2} = (d+1)(2n - 2d - 3).$$
The probability of a bad event $A \in \{A_e, A_p \,:\, e \in E(G), p \in (V(G) \times V(G)) \setminus E(G)\}$ is
\begin{eqnarray*}
\pr(A) \leq \dfrac{1}{k^{d-1}} \leq \dfrac{1}{2e(d+1)(n-d)} < \dfrac{1}{e(\max\{|J_e|, |J_p|\} + 1)},
\end{eqnarray*}
and so the result holds by the Symmetric Local Lemma.
\end{proof}

A bound for total list weightings is similarly obtained:

\begin{thm}\label{dregseqirrtot}
For any $d$-regular graph $G$, $d \geq 2$, $\textup{ls}_\sigma^t(G) \leq \left\lceil \left(2e(d+1)(n-d)\right)^{1/d} \right\rceil$.
\end{thm}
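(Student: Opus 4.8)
The plan is to replay the proof of Theorem \ref{dregseqirr} almost verbatim, the only substantive change being that we now weight the vertices of $G$ as well as the edges. Adjoining a vertex weight lengthens each induced sequence by one, which raises every exponent in the probability estimates by one and thereby replaces the $(d-1)$ in the exponent of the edge bound by $d$.

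First I would fix an arbitrary ordering of $E(G)$ and set $k = \lceil (2e(d+1)(n-d))^{1/d} \rceil$. For each edge $e$ choose $w(e)$ uniformly at random from its list $L_e$ of size $k$, and for each vertex $u$ choose $w(u)$ uniformly at random from its list $L_u$ of size $k$, all choices independent. Let $c(u)$ be the sequence built from the vertex weight $w(u)$ together with the weights of the $d$ edges incident to $u$, ordered by the fixed ordering of $E(G)$ with $w(u)$ placed in a fixed position; thus $c(u)$ has length $d+1$. Then, exactly as before, I would introduce the bad events: for each edge $e=uv$ let $A_e$ be the event $c(u)=c(v)$, and for each non-adjacent pair $p=\{u,v\}$ let $A_p$ be the event $c(u)=c(v)$.

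Next I would record the probability and dependency estimates. Because each sequence now has $d+1$ entries and two adjacent vertices still share only the weight of the common edge $uv$, equality is forced in $d$ independent coordinates, so $\pr(A_e)\le 1/k^d$; a non-adjacent pair shares no weight variable, giving $\pr(A_p)\le 1/k^{d+1}$. The key observation is that the dependency sets are \emph{unchanged} from Theorem \ref{dregseqirr}: a vertex weight $w(u)$ is a random variable attached to the single vertex $u$, hence shared only by events already incident to $u$ in the dependency structure, so adjoining the vertex weights enlarges no dependency set. Consequently $|J_e|\le d(2n-2d-1)$ and $|J_p|\le (d+1)(2n-2d-3)$ as before. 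Feeding these into the Symmetric Local Lemma, the chosen $k$ yields, for every bad event $A$,
\[
\pr(A) \leq \frac{1}{k^d} \leq \frac{1}{2e(d+1)(n-d)} < \frac{1}{e(\max\{|J_e|, |J_p|\} + 1)},
\]
the last inequality holding because $\max\{|J_e|,|J_p|\}+1 < 2(d+1)(n-d)$, just as in Theorem \ref{dregseqirr}. The Lemma then gives a positive probability that no bad event occurs, i.e.\ a total $k$-list-weighting in which all vertex sequences are distinct; as the ordering of $E(G)$ was arbitrary, this bounds $\textup{ls}_\sigma^t(G)$ as claimed.

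The main obstacle, and the only genuinely new point, is justifying the probability bound $\pr(A_e)\le 1/k^d$: one must confirm that the independent vertex weight raises the exponent by exactly one while the shared edge $uv$ lowers it by exactly one, and that the fixed placement of the vertex weight within the sequence introduces no further coincidences forcing $c(u)=c(v)$. The hypothesis $d\ge 2$ keeps the sequences long enough (length at least three) for these estimates to close, paralleling the role of niceness in Theorem \ref{dregseqirr}; everything else is a routine transcription of that argument.
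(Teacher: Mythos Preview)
Your proposal is correct and is exactly the approach the paper intends: the paper offers no separate proof, simply stating that the bound ``is similarly obtained'' from Theorem~\ref{dregseqirr}. The only change is that the extra vertex weight lengthens each sequence by one, converting the exponent $d-1$ into $d$ while leaving the dependency counts $|J_e|$ and $|J_p|$ unchanged, just as you describe.
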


As with our results on $\chWe(G)$ and $\chWt(G)$, these theorems generalize to graphs with arbitrary maximum and minimum degrees.  In particular, we can show that there is a constant bound on general sequence list irregularity strength for graphs with sufficiently large minimum degree.

\begin{thm}\label{seqirrstrengthgeneral}
If $G$ is a graph with minimum degree $\delta(G) = \delta$ and maximum degree $\Delta(G) = \Delta$, then 
\begin{eqnarray*}
\textup{ls}_\sigma^e(G) \leq \left\lceil\left(2e(\D+1)(n-\D)\right)^{1/\delta-1} \right\rceil, \textrm{ and  \,\,}
\textup{ls}_\sigma^t(G) \leq \left\lceil \left(2e(\D+1)(n-\D)\right)^{1/\delta} \right\rceil.
\end{eqnarray*}
\end{thm}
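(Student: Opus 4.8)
The plan is to run the Symmetric Local Lemma argument of Theorem \ref{dregseqirr} with the single degree $d$ split into two roles: the minimum degree $\delta$ will bound the probabilities of the bad events, while the maximum degree $\Delta$ will bound the sizes of their dependency sets. Fix an arbitrary ordering of $E(G)$. For the edge statement, choose each $w(e)$ independently and uniformly from its list $L_e$ of size $k = \lceil(2e(\Delta+1)(n-\Delta))^{1/(\delta-1)}\rceil$; for the total statement, additionally pick each vertex weight uniformly from a list of size $k = \lceil(2e(\Delta+1)(n-\Delta))^{1/\delta}\rceil$, and let $c(u)$ denote the induced sequence at $u$. The key simplification is that two vertices of distinct degree have sequences of distinct length and are therefore automatically distinguished; hence the bad events are exactly $A_{uv}$ (the event $c(u)=c(v)$) ranging over all pairs $\{u,v\}$ with $\deg(u)=\deg(v)$, whether or not $uv\in E(G)$.

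For a pair of common degree $d\geq\delta$, the computation from Theorem \ref{sequencecolouring} (reused in Theorem \ref{dregseqirr}) gives $\pr(A_{uv})\leq 1/k^{d-1}\leq 1/k^{\delta-1}$ when $uv$ is an edge, since the two sequences share the entry coming from $uv$, and $\pr(A_{uv})\leq 1/k^{d}\leq 1/k^{\delta}$ when $\{u,v\}$ is a non-edge; thus every bad event has probability at most $1/k^{\delta-1}$ in the edge case, and in the total case each sequence gains the extra vertex-weight entry so the uniform bound improves to $1/k^{\delta}$. For the dependency sets, $A_{uv}$ is determined only by the weights incident to $u$ or $v$ (together with the weights of $u$ and $v$ in the total version), so it is mutually independent of any $A_{xy}$ for which $\{x,y\}$ is disjoint from $N(u)\cup N(v)\cup\{u,v\}$. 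Since $|N(u)\cup N(v)\cup\{u,v\}|\leq 2\Delta+2$, the number of pairs meeting this set is at most $\binom{n}{2}-\binom{n-2\Delta-2}{2}=(\Delta+1)(2n-2\Delta-3)$, and so $D\leq(\Delta+1)(2n-2\Delta-3)$.

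It then remains to verify the hypothesis $\pr(A)<1/(e(D+1))$. Since $2(\Delta+1)(n-\Delta)-\big[(\Delta+1)(2n-2\Delta-3)+1\big]=3\Delta+2>0$, the choice of $k$ yields $k^{\delta-1}\geq 2e(\Delta+1)(n-\Delta)>e(D+1)$, whence $\pr(A)\leq 1/k^{\delta-1}<1/(e(D+1))$; the total case is identical with the exponent $\delta-1$ replaced by $\delta$. The Symmetric Local Lemma then produces a weighting avoiding every bad event, which is precisely a sequence-distinguishing (list-)weighting, and both bounds follow. The only real obstacle is bookkeeping: one must keep the probability bound governed by the \emph{minimum} degree (to minimise the exponent on $k$) and the dependency bound governed by the \emph{maximum} degree (to maximise the neighbourhood), and check that freely mixing edge and non-edge bad events spoils neither extremal estimate; past that, the argument reduces to the routine inequality displayed above.
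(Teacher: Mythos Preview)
Your proposal is correct and is exactly the argument the paper has in mind: the paper does not give an explicit proof of Theorem~\ref{seqirrstrengthgeneral} but simply remarks that Theorems~\ref{dregseqirr} and~\ref{dregseqirrtot} ``generalize to graphs with arbitrary maximum and minimum degrees,'' which is precisely what you carry out by letting $\delta$ control the probability bound and $\Delta$ the dependency bound. Your additional observation that pairs of unequal degree may be discarded outright is a harmless (and tidy) simplification that the regular case never needed.
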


As a consequence, there is a constant bound on general sequence list irregularity strength for graphs with sufficiently large minimum degree.
%

\begin{cor}\label{largemin}
Let $n,k \in \Z^{+}$.  If $G$ is a graph on $n$ vertices with 
minimum degree $\delta(G) = \delta > c\log{n}$ for large enough $c=c(k)$, then $\textup{ls}_\sigma^e(G) \leq k$.
\end{cor}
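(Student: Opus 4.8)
The plan is to derive the corollary directly from Theorem~\ref{seqirrstrengthgeneral}, which already supplies the estimate $\textup{ls}_\sigma^e(G) \leq \left\lceil\left(2e(\Delta+1)(n-\Delta)\right)^{1/(\delta-1)}\right\rceil$. Since $\lceil x \rceil \leq k$ holds precisely when $x \leq k$, it suffices to show that the hypothesis $\delta > c\log n$ forces
$$2e(\Delta+1)(n-\Delta) \leq k^{\delta-1}.$$
First I would eliminate $\Delta$ from the left-hand side: because $\Delta \leq n-1$ we have $\Delta+1 \leq n$ and $n-\Delta \leq n$, so the left-hand side is at most $2en^2$. Thus it is enough to prove the $\Delta$-free inequality $2en^2 \leq k^{\delta-1}$, and the remaining work is to check that a logarithmic lower bound on $\delta$ guarantees this once $c$ is chosen appropriately in terms of $k$.

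Taking logarithms, $2en^2 \leq k^{\delta-1}$ is equivalent to $(\delta-1)\log k \geq 2\log n + \log(2e)$. The key step is to absorb this into the hypothesis $\delta-1 > c\log n - 1$: I would choose $c=c(k)$ so large that $c\log n - 1$ exceeds $\tfrac{2\log n + \log(2e)}{\log k}$ for every $n$ for which a graph meeting the hypotheses can exist. Writing $c = \tfrac{2}{\log k} + C_0$, the inequality collapses to $C_0\log n \geq 1 + \tfrac{\log(2e)}{\log k}$; since any $n$ admitting a graph with $\delta \leq n-1$ and $\delta > c\log n$ must satisfy $n \geq 2$, and since $k \geq 2$ keeps the right-hand side a bounded constant, a single absolute choice of $C_0$ (hence a fixed $c=c(k)$) suffices. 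The base of the logarithm in the hypothesis is immaterial, as changing it merely rescales $c$.

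I expect no genuine obstacle here, since all the real content lives in Theorem~\ref{seqirrstrengthgeneral}; the corollary only records that the exponent $1/(\delta-1)$ pushes the polynomial factor $2en^2$ below $k$ as soon as $\delta$ grows like a constant times $\log n$. The one point requiring care is uniformity in $n$: the coefficient of $\log n$ on the right-hand side of the logarithmic inequality is $2/\log k$, so $c$ must be taken strictly larger than $2/\log k$ for the $\log n$ terms to dominate, with the additive constants swept up using $n \geq 2$. I would also note that the statement is meaningful only for $k \geq 2$ (for $k=1$ each list is a singleton and the forced weighting cannot separate many vertices), so the selection of $c(k)$ is made under that assumption.
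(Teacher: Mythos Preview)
Your proposal is correct and follows essentially the same route as the paper: invoke Theorem~\ref{seqirrstrengthgeneral}, bound $(\Delta+1)(n-\Delta)$ by a quadratic in $n$, and then choose $c(k)$ so that $k^{\delta-1}$ dominates that quadratic whenever $\delta > c\log n$. The only cosmetic difference is that the paper uses the sharper bound $(\Delta+1)(n-\Delta)\le \tfrac14(n+1)^2$ from maximizing the quadratic in $\Delta$, whereas you use the cruder $(\Delta+1)(n-\Delta)\le n^2$; since both are $O(n^2)$ this affects only the implicit constant in $c(k)$ and not the argument.
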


\begin{proof}
Choose $c$ so that $c\log{n} \geq \log_k\left(\frac{e}{2}(n+1)^2 + 1\right) + 1$.
Note that the function $f(\Delta) = (\D+1)(n-\D)$ is maximized when $\Delta = \frac{1}{2}(n-1)$, and so $(\D+1)(n-\D) \leq \frac{1}{4}(n+1)^2$.  
Since $\delta(G) >  \log_k\left(\frac{e}{2}(n+1)^2 + 1\right) + 1$,
\begin{align*}
& k^{\delta-1} > \left(\frac{e}{2}(n+1)^2 + 1\right) \\
 \implies & k >  \left(2e(\D+1)(n-\D)\right)^{1/\delta-1} \\
 \implies & k \geq  \left\lceil\left(2e(\D+1)(n-\D)\right)^{1/\delta-1} \right\rceil.
\end{align*}
The result follows by Theorem \ref{seqirrstrengthgeneral}.
\end{proof}

Finally, we extend our irregularity strength results to multigraphs; the proof follows similarly to that of Theorem \ref{multiseq}.

\begin{thm}
Let $M$ be a loopless multigraph with maximum edge multiplicity $\mu(G) = \mu$, minimum degree $\delta(G) = \delta$, and maximum degree $\Delta(G) = \Delta$.  For any positive integer $k$,
\begin{enumerate}
\item[(1)] if $\mu < \delta - \log_k(2e(\D+1)(n-\D)) $, then $\textup{ls}_{\sigma}^e(M) \leq k$.
\item[(2)] if $\mu < \delta - \log_k(2e(\D+1)(n-\D)) + 1$, then $\textup{ls}_{\sigma}^t(M)  \leq k$.
\end{enumerate}
\end{thm}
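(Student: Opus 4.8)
The plan is to adapt the Lovász Local Lemma argument used to prove Theorem \ref{multiseq}, combining it with the bad-event structure from the irregularity-strength proofs (Theorems \ref{dregseqirr} and \ref{seqirrstrengthgeneral}). Fix an arbitrary ordering of $E(M)$, assign each edge $e$ a list $L_e$ of size $k$, and weight each edge independently and uniformly at random from its list. For an ordered pair of distinct vertices $\{u,v\}$ (adjacent or not), let $A_{\{u,v\}}$ be the event that $c(u)=c(v)$, where $c(\cdot)$ denotes the induced sequence of incident edge weights. Since we require \emph{every} vertex to receive a distinct sequence, the bad events range over all pairs of vertices of equal degree, exactly as in Theorem \ref{dregseqirr}, rather than only over edges as in Theorem \ref{multiseq}.

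The first key step is to bound $\pr(A_{\{u,v\}})$ in the multigraph setting. Here the multiplicity enters: if $u$ and $v$ are joined by $l$ parallel edges, then those $l$ edges force shared entries and the event is determined by at most $\delta - l \geq \delta - \mu$ \emph{free} coordinates, so $\pr(A_{\{u,v\}}) \leq 1/k^{\delta - \mu}$ for adjacent pairs; for non-adjacent pairs of equal degree the bound is at least as strong, $\pr(A_{\{u,v\}}) \leq 1/k^{\delta-\mu}$ as well (non-adjacent pairs are in fact bounded by $1/k^{\delta}$, so the adjacent case dominates). This is the analogue of the $\pr(A_{uv}) \leq 1/k^{\delta-\mu}$ estimate in the proof of Theorem \ref{multiseq}(1), now applied to the full family of vertex pairs.

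The second step is the dependency bound. Exactly as in Theorem \ref{seqirrstrengthgeneral}, a bad event for a pair containing a vertex of $N[u]\cup N[v]$ can share a random edge-weight, so the dependency set of any bad event has size at most the number of vertex pairs meeting $N(u)\cup N(v)$, which is bounded in terms of $\Delta$ and $n$; reusing the counting from Theorem \ref{dregseqirr} gives $D \leq 2e(\D+1)(n-\D)/e - 1$ up to the same worst-case pair count, so that $e(D+1) \leq 2e(\D+1)(n-\D)$ suffices as the Local Lemma threshold. Applying the Symmetric Local Lemma, a distinguishing weighting exists provided
\begin{align*}
\frac{1}{k^{\delta-\mu}} < \frac{1}{2e(\D+1)(n-\D)},
\end{align*}
and rearranging gives $\mu < \delta - \log_k(2e(\D+1)(n-\D))$, which is precisely condition (1). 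For part (2) the total-weighting version adds a vertex weight, raising the number of free coordinates by one so that $\pr(A_{\{u,v\}}) \leq 1/k^{\delta-\mu+1}$; the same dependency count then yields the threshold $\mu < \delta - \log_k(2e(\D+1)(n-\D)) + 1$, matching condition (2).

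I expect the main obstacle to be a careful verification of the probability bound for adjacent pairs joined by multiple edges, since one must argue that the parallel edges genuinely remove free coordinates from the comparison (and that this holds uniformly over all equal-degree pairs, not merely in the regular case). The dependency-set count transfers almost verbatim from Theorem \ref{seqirrstrengthgeneral}, and the final algebra is routine, so the substance of the proof lies in getting the multiplicity-corrected exponent $\delta-\mu$ correct; once that is in hand, the Local Lemma application is immediate and the statement follows as indicated.
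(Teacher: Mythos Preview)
Your proposal is correct and takes essentially the same approach the paper intends: the paper simply states that the proof ``follows similarly to that of Theorem~\ref{multiseq},'' and you have correctly combined the multiplicity-adjusted probability bound $\pr(A_{\{u,v\}}) \le 1/k^{\delta-\mu}$ from that theorem with the all-pairs bad-event structure and the dependency count $D+1 \le 2(\Delta+1)(n-\Delta)$ from Theorems~\ref{dregseqirr} and~\ref{seqirrstrengthgeneral}. The Symmetric Local Lemma then yields the stated inequalities exactly as you derive them.
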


\section{Acknowledgements}

The authors express their gratitude to their respective funding agencies - Carleton University, the Natural Sciences and Engineering Research Council of Canada, and the Ontario Ministry of Research and Innovation.

\bibliographystyle{plain}
\bibliography{references}

\end{document}